\newif\ifpreprint%
\def\sectionfont{\sffamily\Large\bfseries\boldmath}
\def\subsectionfont{\sffamily\large\bfseries\boldmath}
\def\paragraphfont{\sffamily\normalsize\bfseries\boldmath}
\titleformat*{\section}{\sectionfont}
\titleformat*{\subsection}{\subsectionfont}
\titleformat*{\subsubsection}{\paragraphfont}
\titleformat*{\paragraph}{\paragraphfont}
\titleformat*{\subparagraph}{\paragraphfont}
\setlist{nolistsep}
\newcounter{algorithmctr}[section]
\renewcommand{\thealgorithmctr}{\thesection.\arabic{algorithmctr}}
\newtheorem{theorem}{Theorem}
\newtheorem{lemma}{Lemma}
\newtheorem{assumption}{Assumption}
\newtheorem{definition}{Definition}
\DeclareRobustCommand{\qed}{%
	\ifmmode % if math mode, assume display: omit penalty etc.
	\else \leavevmode\unskip\penalty9999 \hbox{}\nobreak\hfill
	\fi
	\quad\hbox{\qedsymbol}}
\newcommand{\openbox}{\leavevmode
	\hbox to.77778em{%
		\hfil\vrule
		\vbox to.675em{\hrule width.6em\vfil\hrule}%
		\vrule\hfil}}
\newcommand{\qedsymbol}{\openbox}
\newenvironment{proof}[1][\proofname]{\par
	\normalfont
	\topsep6\p@\@plus6\p@ \trivlist
	\item[\hskip\labelsep\itshape
	#1.]\ignorespaces
}{%
	\qed\endtrivlist
}
\newcommand{\proofname}{Proof}
\theoremstyle{thmstyleone}%
\newtheorem{theorem}{Theorem}%  meant for continuous numbers
\newtheorem{lemma}[theorem]{Lemma}% 
\theoremstyle{thmstyletwo}%
\theoremstyle{thmstylethree}%
\newtheorem{definition}{Definition}%
\title{\bfseries\sffamily An Efficient IPM Implementation for A Class of Nonsymmetric Cones}
\author{Yuwen Chen and Paul Goulart}
\begin{document}

\ifpreprint
\maketitle

\begin{abstract}
	We present an efficient implementation of interior point methods for a family of nonsymmetric cones, including generalized power cones, power mean cones and relative entropy cones, by exploiting underlying low-rank and sparse properties of Hessians of homogeneous self-concordant barrier functions. We prove that the augmented linear system in our interior point method is sparse and quasi-definite, enabling the use of sparse LDL factorization with a dual scaling strategy for nonsymmetric cones. Numerical results show that our proposed implementation for nonsymmetric cones performs much faster than the state-of-art solvers for spare problems and scales well for large problems.
\end{abstract}
\else
	\title{An Efficient IPM Implementation for A Class of Nonsymmetric Cones}

\author*[1]{\fnm{Yuwen} \sur{Chen}}\email{yuwen.chen@eng.ox.ac.uk} 

\author*[1]{\fnm{Paul} \sur{Goulart}}\email{paul.goulart@eng.ox.ac.uk}

\affil*[1]{\orgdiv{Department of Engineering Science}, \orgname{University of Oxford}, \orgaddress{\street{Parks Road, OX1 3PJ}, \city{Oxford}, \country{UK}}}

\date{Received: date / Accepted: date}

\abstract{
	We present an efficient implementation of interior point methods for a family of nonsymmetric cones, including generalized power cones, power mean cones and relative entropy cones, by exploiting underlying low-rank and sparse properties of Hessians of homogeneous self-concordant barrier functions. We prove that the augmented linear system in our interior point method is sparse and quasi-definite, enabling the use of sparse LDL factorization with a dual scaling strategy for nonsymmetric cones. Numerical results show that our proposed implementation for nonsymmetric cones performs much faster than the state-of-art solvers for spare problems and scales well for large problems.}
%
%\keywords{Nonsymmetric cone, IPM, LDL}
%
%\maketitle
\fi

\section{Literature Review}
Convex optimization has been widely used in fields such as control engineering~\cite{Boyd94}, signal processing~\cite{Luo06} and finance~\cite{Cornuejols18}, and includes common problem classes such as linear programming (LP), quadratic programming (QP), second-order cone programming (QP) and semidefinite programming (SDP) problems, as well as other problems classes defined over more exotic cone constraints. Common modern approaches for general convex optimization include first-order operator-splitting methods~\cite{Odonoghue16,Garstka21}, and second-order interior-point methods (IPMs)~\cite{Nesterov98}, where the latter is preferred when high accuracy solutions are required. Interior point methods commonly employ logarithmically-homogeneous self-concordant barrier (LHSCB) functions, which apply penalties on convex inequality constraints defined over cones. The self-concordant parameter $\nu$ is related to the convergence speed of an IPM, where the total iteration number is known to be $\mathcal{O}(\sqrt{\nu}\ln(1/\epsilon))$ in theory. 

State-of-the-art conic interior-point solvers commonly employ an infeasible starting-point method where the initial iterate is chosen on the interior of any conic inequality constraints, with linear equalities not necessarily satisfied until the interior-point algorithm converges to optimality. Since general purpose optimization methods must allow for the possibility of infeasible problems being posed, IPMs require a mechanism that will work either by providing a solution (in the feasible case) or a certificate of infeasibility (in the infeasible case).  This requirement motivated the development of the homogeneous self-dual embedding (HSDE)~\cite{Ye94}, which augments the original problem to a new one that is always feasible. The new augmented problem is always solvable, and from its solution one can recover either a solution or certificate of infeasibility for the original problem. The HSDE method is very popular and is widely used in many solvers, e.g. SCS~\cite{Odonoghue16}, CVXOPT~\cite{cvxopt}, ECOS~\cite{Domahidi13} and Mosek~\cite{mosek}.

The great majority of research on efficient IPMs has focussed on problems with conic constraints that are self-dual (also called self-scaled), e.g. linear programming (LP), quadratic programming (QP), second order cone programming (SOCP) and semidefinite programming (SDP).  Cones that do \emph{not} possess the self-duality property are called \emph{nonsymmetric} and are a topic of more recent interest interior point methods~\cite{Nesterov12}. The extension of the homogeneous self-dual model for the nonsymmetric case was proposed in~\cite{Skajaa15} with a variant implemented in ECOS~\cite{Serrano15}, and then a complementary proof for $\mathcal{O}(\sqrt{\nu}\ln(1/\epsilon))$ convergence provided in~\cite{Papp17}. So far, the exponential cone and the power cone are the two most studied and are available in the commercial solver Mosek~\cite{mosek} and some open-source solvers like ECOS~\cite{Domahidi13} and Alfonso~\cite{alfonso}. 
It has also been shown that some existing conic optimization problems can be solved more efficiently by exploiting their special structure through the lens of nonsymmetric conic optimization, like sparse SDPs~\cite{Andersen10} and sum of squares (SOS) programs~\cite{Papp19}. A Matlab-based solver DDS~\cite{DDS} and a Julia-based solver Hypatia introduce some nonsymmetric cones that solve many optimization problems more efficiently than their extended formulations based on three-dimensional exponential cones or power cones. 

However, nonsymmetric cones are not self-scaled, so one can't compute Nesterov-Todd (NT) scaling points as is possible in the symmetric case. Instead, the dual scaling strategy is commonly used for nonsymmetric cones in solvers like ECOS and Hypatia. Recently, a primal-dual scaling algorithm motivated by~\cite{Tuncel01} was proposed in~\cite{Dahl21} and implemented in Mosek, which claims to be the fastest solver for exponential and power cones at present. Both the dual scaling and the primal-dual scaling, require conjugate gradients of conic barrier functions which usually don't have closed forms for nonsymmetric cones. The numerical implementation for computing conjugate gradients of some nonsymmetric cones is detailed in~\cite{Kapelevich22}.

The \emph{power cone} is a powerful tool to model various cones such as the $p$-norm cone, the power mean cone and the generalized power cone~\cite{Chares09}. The PhD thesis of Chares~\cite{Chares09} describes a $3$-self-concordant barrier function and conjectured a $(n+1)$-self-concordant barrier for $(n+1)$-dimensional power cone $\mathcal{K}_\alpha^{(n)}:=\left\{(x, z) \in \mathbb{R}_{+}^n \times \mathbb{R}: \prod_{i=1}^n x_i^{\alpha_i} \geq|z|\right\}$ of $\alpha \in \mathbb{R}^n_{\ge}$ and $\sum_{i=1}^n \alpha_i= 1$, which was finally validated in the context of a more general $(n,m)$-generalized power cone $\mathcal{K}_\alpha^{(n,m)}:=\left\{(x, z) \in \mathbb{R}_{+}^n \times \mathbb{R}^m: \prod_{i=1}^n x_i^{\alpha_i} \geq \|z \|_2 \right\}$~\cite{Roy22}. The \emph{relative entropy cone} can be regarded as the extension of three-dimensional exponential cone or second-order cone and is suitable to model many problems in circuit design, statistical learning and power control in communication systems, etc.~\cite{Chandrasekaran17}.

%Higher-order correction~\cite{Wright97,Dahl21} plays an important role in accelerating practical performance of IPMs, though it is lack of theoretical guarantees. Mehrotra's correction~\cite{Mehrotra92} is a popular one for symmetric cones as in CVXOPT solver~\cite{cvxopt}. For nonsymmetric cones, a efficient 3rd-order correction was proposed for the primal-dual scaling strategy in~\cite{Dahl21}. Hypatia also proposed a novel 3rd-order correction for the dual scaling strategy that reduce total number of iterations but the step size search becomes nonlinear~\cite{Coey23} and requires two more linear-system solves per iteration. Nevertheless, it reduces computational time compared to the one without 3rd-order correction. 
%\PGnote{This paragraph doesn't really follow logically from the one before.}	

In this paper, our contribution are: $(1)$ to propose a sparse decomposition of generalized power cones, power mean cones, relative entropy cones that can be used in direct sparse LDL factorization; $(2)$ to show how to compute the conjugate barrier functions for these three nonsymmetric cones to measure the proximity to the central path. 

The sketch of this paper is given as follows: We introduce basic definitions of homogeneous self-dual embedding and interior point methods in Section~\ref{section-fundamentals}. Then, we propose a general structure that is suitable for sparse LDL factorization in Section~\ref{section:augmented-sparsity}. The proposed augmented sparse decomposition of Hessians for nonsymmetric cones are shown in Section~\ref{section-sparse-decomposition} and the centrality check for IPMs with nonsymmetric cones is detailed in Section~\ref{section-centrality-check}. In Section~\ref{section-experiments}, numerical results show that our proposed sparse implementation for nonsymmetric cones performs better than the state-of-art solvers. Finally, the conclusion is summarized in Section~\ref{section-conclusion}.

\paragraph*{Notation}
$\mathbb{R}^d$ is the $d$-dimensional space of reals and $x \in \mathbb{R}_{+}^d$ $(\mathbb{R}_{++}^d)$ denotes $x \in \mathbb{R}^d$ is nonnegative (positive) elementwise. $X \in \mathbb{S}^n$ denotes $X$ is a symmetric matrix in $\mathbb{R}^{n \times n}$. $X \in \mathbb{S}_{+}^n$ ($\mathbb{S}_{++}^n$) denotes $X$ is positive semidefinite (positive definite). Functions $f(\cdot)$ and $f^*(\cdot)$ consist of a convex conjugate pair.  The Hessian of $f$ at $x$ is denoted by $f''(x)$ and the 3rd-order directional derivative along direction $h$ by $f'''(x)[h] \in \mathbb{R}^{n \times n}$. The set of interior points of $\mathcal{C}$ is denoted by $\text{int}(\mathcal{C})$.  The inner product of vectors $x,y$ in the Euclidean space $\mathbb{R}^n$ is denoted by $\langle x,y \rangle$ and the Euclidean norm of $x$ by $\|x\|$.  The product set of $\mathcal{C}_1, \mathcal{C}_2$ is denoted $\mathcal{C} := \mathcal{C}_1 \times \mathcal{C}_2$, e.g.\ $(x,y) \in \mathcal{C}$ means $x \in \mathcal{C}_1, y \in \mathcal{C}_2$. The index set $\{1, 2, \dots, d\}$ is abbreviated as $[d]$. An $n$-dimensional vector with value $1$ for every entry is denoted by $\mathbf{1}^{n}$.  The indicator function $\delta_{i, j}$ is
\begin{align*}
	\delta_{i, j} = \left\{ 
	\begin{aligned}
		1, & \qquad i=j, \\ 0, & \qquad \text{otherwise}.
	\end{aligned}
	\right.
\end{align*}

\section{Problem Description}\label{section-fundamentals}

%\subsection{Problem Formulation}
Throughout, we will consider the following primal-dual pair of conic optimization problems:

\begin{subequations}
	\begin{minipage}{0.40\textwidth}
		\begin{align}
		\begin{aligned}
			\min_{x,s} & \quad c^\top x \\
			\text{s.t.} & \quad Gx = h, \\
			& Ax + s = b, \ s \in \mathcal{K},
		\end{aligned}\label{problem-primal} \tag{$\mathcal{P}$}
		\end{align}
	\end{minipage}
	\begin{minipage}{0.1\textwidth}
		~
	\end{minipage}
	\begin{minipage}{0.40\textwidth}
		\begin{align}
		\begin{aligned}
			\max_{y,z} & \ -h^\top y - b^\top z \\
			\text{s.t.} & \quad A^\top z + G^\top y + c = 0, \\
			& z \in \mathcal{K}^*,
		\end{aligned}\label{problem-dual} \tag{$\mathcal{D}$}
		\end{align}
	\end{minipage} \hfill
	\\[2ex]
\end{subequations}
where $G \in \mathbb{R}^{p \times n}, A \in \mathbb{R}^{m \times n}, c \in \mathbb{R}^{n}, h \in \mathbb{R}^{p}, b \in \mathbb{R}^{m}$, $\mathcal{K}$ is a nonempty, closed convex cone and $\mathcal{K}^*$ is the dual cone of $\mathcal{K}$. We assume that $\mathcal{K}$ does not contain any straight lines, i.e.\ it is \emph{pointed}.  We call problem~\eqref{problem-primal} the \emph{primal} problem and~\eqref{problem-dual} the \emph{dual} problem, and will assume throughout that strong duality holds.  

The problem \eqref{problem-primal} and its dual \eqref{problem-dual} are quite general, and cover a wide range of standard problem types in numerical optimization.  If $\mathcal{K}$ is the nonnegative orthant, i.e.\ $\mathcal{K} = \mathbb{R}_{+}$, then \eqref{problem-primal} represents a linear program.   Likewise, if $\mathcal{K}$ is the second-order cone $\mathcal{K}_{\text{soc}}$ or the positive semidefinite cone $\mathcal{S}^n_+$, then \eqref{problem-primal} represents a second-order cone program (SOCP) or semidefinite program (SDP), respectively.   In each of these cases, or where $\mathcal{K}$ is a composition of cones of these types, the cone $\mathcal{K}$ is \emph{symmetric} and satisfies the self-dual property $\mathcal{K} = \mathcal{K}^*$.   

In this paper we focus on problems in which the cone $\mathcal{K}$ is \emph{non-symmetric} so that the self-dual property for $\mathcal{K}$ does \emph{not} hold, and will consider in detail a family of non-symmetric cones for which we can exploit special structure within an interior point method.   In the remainder of this section, we sketch the basic structure of an interior point method for such cones, and subsequently consider in detail the issue of sparsity exploitation for our cones of interest.

\subsubsection*{Logarithmically homogeneous self-concordant barrier (LHSCB) functions}
A function $f: \mathbb{R}^n \to \mathbb{R}$ is called $\nu-$LHSCB for a convex cone $\mathcal{K} \subset \mathbb{R}^n$, if it satisfies 
\begin{align} 
	\begin{aligned}
		&\vert \nabla^3 f(x)[r,r,r] \vert \le 2\left(\nabla^2 f(x)[r,r]\right)^{3/2} & \forall x \in \mathrm{int}\mathcal{K}, r \in \mathbb{R}^n, \\
		&f(\lambda x) = f(x) - \nu \ln(\lambda) & \forall x \in \mathrm{int}\mathcal{K}, \lambda > 0,
	\end{aligned}\label{self-concordant}
\end{align}
for some scalar $\nu \ge 1$.  We first recall several standard properties of such a barrier function; the reader is referred to~\cite{Nesterov97} for details.

The gradient $g$ of $f$ satisfies
\begin{align}
	\langle g(x),x \rangle = - \nu, \quad \forall x \in \mathrm{int}\mathcal{K}. \label{fundamental-nu-equality}
\end{align}
The convex conjugate $f^*$, defined as
\begin{align}
	f^*(y) := \max_{x \in \mathrm{int}\mathcal{K}} \{-\langle y,x \rangle - f(x)\}, \label{fundamental-conjugate-barrier-1}
\end{align}
is also $\nu-$LHSCB in the interior of $\mathcal{K}^*$, and we call it the \emph{conjugate barrier}. The gradient $g^*$ of $f^*$ is the solution of
\begin{align}
	g^*(y) := - \arg \max_{x \in \mathrm{int}\mathcal{K}} \{-\langle y,x \rangle - f(x)\}. \label{fundamental-conjugate-gradient}
\end{align} 
Likewise, we also call $f$ is the conjugate and $g$ is the conjugate gradient of $f^*$. From \eqref{fundamental-nu-equality}--\eqref{fundamental-conjugate-gradient} it can be verified that
\begin{align}
	f^*(y) = - \langle y, - g^*(y) \rangle - f(-g^*(y)) = -\nu - f(-g^*(y)), \ \forall y \in \mathrm{int}\mathcal{K}^*. \label{fundamental-conjugate-barrier-2}
\end{align}
Finally, we have 
\begin{align}
	-g^*(-g(x)) = x, \ \forall x \in \mathrm{int}\mathcal{K}, \quad -g(-g^*(y)) = y, \ \forall y \in \mathrm{int}\mathcal{K}^*. \label{fundamental-bilinear-map}
\end{align}

\subsection{Homogeneous self-dual IPM for nonsymmetric cones}\label{section-homogeneous-IPM}
We next introduce several basic building blocks for an IPM based on the homogeneous self-dual embedding (HSDE) for nonsymmetric cones.
%\subsubsection{Homogeneous self-dual embedding}
In the simplified HSDE model the KKT condition for~\eqref{problem-primal} and~\eqref{problem-dual} are modified to
\begin{equation}
\begin{gathered}
	\begin{array}{c}
		\left[\begin{array}{l}
			0 \\
			0 \\
			s \\
			\kappa
		\end{array}\right]= \left[\begin{array}{cccc}
			0 & G^\top & A^\top & c \\
			-G & 0 & 0 & h \\
			-A & 0 & 0 & b \\
			-c^\top & -h^\top & -b^\top & 0
		\end{array}\right]\left[\begin{array}{l}
			x \\
			y \\
			z \\
			\tau
		\end{array}\right] 
	\end{array} \\[2ex]
		(s, z, \kappa, \tau) \in \mathcal{F} , x \in \mathbb{R}^n, \ y \in \mathbb{R}^p,
\end{gathered}\label{HSDE}
\end{equation}
where $\mathcal{F}:=\mathcal{K} \times \mathcal{K}^* \times \mathbb{R}_{+} \times \mathbb{R}_{+}$ and  the cone $\mathcal{K}$ may represent the composition of multiple standard cone types.    We refer to \eqref{HSDE} as an HSDE model for \eqref{problem-primal}--\eqref{problem-dual} since the problem of finding a feasible point for \eqref{HSDE} is dual to itself\footnote{NB: this does \emph{not} require that the cone $\mathcal{K}$ should be self-dual.}, its feasible points form a cone, and any feasible point $(x^*, y^*, z^*, s^*, \tau^*, \kappa^*)$ of \eqref{HSDE} can be used to extract either a solution to \eqref{problem-primal}--\eqref{problem-dual} or a certificate of infeasibility.   In particular, if $\tau^* > 0$ then \eqref{problem-primal}--\eqref{problem-dual} is feasible with solution $(x^*/\tau^*, y^*/\tau^*, z^*/\tau^*, s^*/\tau^*)$.    Our IPM approach amounts to a Newton-like method for finding a feasible point for \eqref{HSDE}.  
% \begin{enumerate}
% 	\item Optimality: $\tau^*>0, \kappa^*=0$;
% 	\item Primal infeasibility: $\tau^*=0, \kappa^*>0$ and $h^\top z^*+b^\top y^*<0$;
% 	\item Dual infeasibility: $\tau^*=0, \kappa^*>0$ and $c^\top x^*<0$ ;
% 	\item Uncertainty: $\tau^*=0, \kappa^*=0$.
% \end{enumerate}
%PG: I commented out the above since in order to say you have an optimal solution for\eqref{problem-primal} I think you would need to discuss how to recover it from a feasible solution of the HSDE, and it's not really necessary to get into the details.

The optimality condition in a convex problem include a complementary slackness condition, which is replaced by a \emph{central path} condition in an IPM whose details depend on the scaling strategy employed. For symmetric cones, e.g.\ nonnegative, second-order and positive-semidefinite cones, we can exploit the self-scaled property~\cite{Nesterov97} that ensures a unique scaling point called the Nesterov-Todd (NT) scaling~\cite{Nesterov97} and is obtained from the combination of primal and dual variables. In the case of nonsymmetric cones, the dual scaling is a popular choice~\cite{Nesterov12,Skajaa15,Andersen10} and is implemented in ECOS~\cite{Serrano15}. A primal-dual scaling with a 3rd-order correction is also effective and is implemented in Mosek~\cite{Dahl21}. In this paper, we will employ the dual scaling strategy so that we can exploit the low-rank information or sparsity of Hessians of dual barrier functions directly. 

%\YCnote{2}{I moved the description of scaling strategies here since it also determines the central path we use in an IPM.}

%\PGnote{The scaling strategy section is a little out of place here, since it's not clear what it refers to in the context of the material presented in the section so far.   When I get to this point I wonder how the scaling strategy connects to \eqref{central-path}, for example.   See related comment above -- I think you have already implicitly committed yourself to the dual scaling strategy by choosing $s = -\mu g^*(z)$.
%	
%	Maybe the above paragraph should come as a explanatory comment (it doesn't need to be its own section) \textbf{after} (12).   There you could explain that 1) choosing $H_s = \mu H^*(z)$ is called the \emph{dual scaling strategy}, commonly used by X and Y, because it is based on the linearization of our choice of model in \eqref{central-path}.    MOSEK instead makes a different choice there, so the linearization is a bit different and they end up with the \emph{primal-dual} strategy.
%}

For the dual scaling strategy, given some initial point $(x^0, y^0, s^0, z^0, \tau^0, \kappa^0)$ with $(s^0,z^0) \in \mathcal{K} \times \mathcal{K}^*$,
the central path is parametrized by $\mu \in (0,1]$ and the gradient $g^*(z)$ of the dual barrier function $f^*(z)$, satisfying
\begin{align}
\begin{aligned}
	\left[\begin{array}{l}
		0 \\
		0 \\
		s \\
		\kappa
	\end{array}\right]= & \left[\begin{array}{cccc}
		0 & G^\top & A^\top & c \\
		-G & 0 & 0 & h \\
		-A & 0 & 0 & b \\
		-c^\top & -h^\top & -b^\top & 0
	\end{array}\right]\left[\begin{array}{l}
		x \\
		y \\
		z \\
		\tau
	\end{array}\right]+\mu\left[\begin{array}{l}
		q_x \\
		q_y \\
		q_z \\
		q_\tau
	\end{array}\right] \\
	& (s, z, \kappa, \tau) \in \mathcal{F}, \\
	& s = -\mu g^*(z), \quad \tau \kappa=\mu,
\end{aligned}\label{central-path}
\end{align}
where
\begin{align*}
	q_x &= -G^\top y^0 -A^\top z^0 -c \tau^0, & q_y &= G x^0 -h \tau^0, \\[1ex]
	q_\tau &= \kappa+c^\top x^0 +h^\top y^0+b^\top z^0, & q_z &= s^0+A x^0- b \tau^0.
\end{align*}
For an IPM with nonsymmetric cones, we require that every iterate should remain close to the central path. Standard metrics for this distance include the proximity measure in~\cite{Nesterov12,Skajaa15} and one defined in terms of shadow iterates~\cite{Tuncel01,Dahl21}.

%\PGnote{I think in \eqref{central-path} you have perhaps already committed yourself to the dual scaling strategy by choosing $s = -\mu g^*(z)$, since it is the equivalent of \cite[Eq.\ (18)]{Andersen10}.   A primal or primal-dual scaling method would use something in its place, e.g. \cite[Eq.\ (17)]{Andersen10} }.

\subsubsection{Computation of the KKT system}
In order to solve \eqref{HSDE} we adopt the widely-used affine-centering (predictor-corrector) framework due to its excellent practical performance. We provide implementation details for this method in Section~\ref{section-implementation}. Note that the step equations for both the predictor and the corrector steps can be written in the unified general form
\begin{align}
	\begin{aligned}
		& \left[\begin{array}{cccc}
			0 & G^\top & A^\top & c \\
			-G & 0 & 0 & h \\
			-A & 0 & 0 & b \\
			-c^\top & -h^\top & -b^\top & 0
		\end{array}\right]\left[\begin{array}{l}
			\Delta x \\
			\Delta y \\
			\Delta z \\
			\Delta {\tau}
		\end{array}\right]  - \left[\begin{array}{c}
			0 \\
			0 \\
			\Delta s \\
			\Delta {\kappa}
		\end{array}\right] = \left[\begin{array}{l}
			d_x \\
			d_y \\
			d_z \\
			d_\tau
		\end{array}\right],\\
		& \qquad \mu H^*(z)\Delta z + \Delta s = -d_s, \ \tau \Delta \kappa + \kappa \Delta \tau = -d_{\kappa},
	\end{aligned}\label{general-KKT}
\end{align}
where only the choice of right-hand side term $d_x,d_y,d_z,d_s,d_{\tau},d_{\kappa}$ differs between the affine and the centering step directions and a given iterate. $H^*(z)$ is the Hessian of the dual barrier $f^*(z), \forall z \in \mathrm{int}\mathcal{K}$.

We typically expect the data matrices $G$ and $A$ to be sparse, particularly for large problems, so we adopt the same approach as \cite{cvxopt,Domahidi13} to preserve this sparsity while reducing the system to one in for which we can perform a symmetric linear solve. First, we substitute the last two equations into the first one to obtain 
\begin{align*}
	\begin{aligned}
		& \left[\begin{array}{cccc}
			0 & G^\top & A^\top & c \\
			G & 0 & 0 & -h \\
			A & 0 & -\mu H^*(z) & -b \\
			-c^\top & -h^\top & -b^\top & \frac{\kappa}{\tau}
		\end{array}\right]\left[\begin{array}{l}
			\Delta x \\
			\Delta y \\
			\Delta z \\
			\Delta {\tau}
		\end{array}\right] = \left[\begin{array}{c}
			d_x \\
			-d_y \\
			d_s - d_z\\
			d_\tau - d_\kappa/\tau
		\end{array}\right].
	\end{aligned}
\end{align*}
This can then be solved by solving a symmetric linear system twice with different right-hand sides, i.e.
\begin{align}
	K\left[\begin{array}{l}
		 x_1 \\
		 y_1 \\
		 z_1
	\end{array}\right]  = \left[\begin{array}{c}
		d_x \\
		-d_y \\
		d_s - d_z
	\end{array}\right], \qquad
	K \left[\begin{array}{l}
		 x_2 \\
		 y_2 \\
		 z_2
	\end{array}\right]  = \left[\begin{array}{r}
		-c \\
		h \\
		b
	\end{array}\right],\label{reduced-KKT-1}
\end{align}
where $K$ is defined as
\begin{align}
	K := \left[\begin{array}{ccc}
		0 & G^\top & A^\top \\
		G & 0 & 0  \\
		A & 0 & -H_s \\
	\end{array}\right] \label{definition-K}
\end{align} 
with $H_s = \mu H^*(z)$. Only the lower right-hand block $H_s$, which may be dense in general, would be different if we had instead employed a primal-dual scaling strategy~\cite{Dahl21}. 
%\PGnote{Hasn't the dual scaling strategy already been fixed at the point where you write down the step equation in \eqref{general-KKT}?   You haven't said what $H^*(z)$ is either.   I think you need to say somewhere very early on that it is the Hessian of the conjugate barrier.  I have described it this way in the SOCP section of \S3, but the terminology needs to be used earlier as well.  I think the first mention was only in section 4.2. }
Finally, we obtain the solution of~\eqref{general-KKT} by first solving for $\Delta \tau$ as 
\[
	\Delta \tau=\frac{d_\tau-d_\kappa / \tau+ c^\top x_1 + h^\top y_1 + b^\top z_1}{\kappa / \tau - c^\top x_2 - h^\top y_2 - b^\top z_2}, \\
\]
followed by
\begin{align*}
	\begin{aligned}[t]
	\Delta x &=  x_1 + \Delta \tau x_2,\\
	\Delta y &=  y_1 + \Delta \tau y_2,\\
	\Delta z &=  z_1 + \Delta \tau z_2,
	\end{aligned} ~~~~~
	\begin{aligned}[t]
	\Delta s &= -d_s-H_s\Delta z, \\ 
	\Delta \kappa&=-\left(d_\kappa+\kappa \Delta \tau\right) / \tau.
	\end{aligned}
\end{align*}
Observe that we only need to factor the matrix $K$ once at each iteration before performing the two linear solves in~\eqref{reduced-KKT-1}. We utilize sparse LDL factorization in~\cite{Davis05} with static ordering to factorize the matrix $K$. We add a small regularization term to the diagonal of $K$ so that it becomes symmetric quasidefinite and the LDL factorization always exists with diagonal $D$ even after any symmetric permutation $P$~\cite{Vanderbei95}, i.e. $K = PLDL^\top P^\top$.

% The static pivoting strategy with $1 \times 1$ pivots makes the sparse LDL taking less time for factorization compared to the standard sparse LDL decomposition~\cite{Duff83}, but it is prone to numerical instability when the matrix $K$ is ill-conditioned. A common way to improve its numerical stability is adding regularization to diagonal terms so that the system becomes numerically factorizable and then applying iterative refinement to the perturbed system, see details in Clarabel~\cite{Clarabel}.

%\PGnote{I think that the final paragraph that appeared above is not needed.   It sort of repeats the one before it, and we want to get to the main point as quickly as possible.   I have commented it out for now.  }

\section{Augmented Sparsity}\label{section:augmented-sparsity}
We will consider a class of nonsymmetric cones for which the term $H_s$ that appears as the lower right-hand block in \eqref{definition-K} is the sum of a sparse matrix plus a few low-rank dense terms, and satisfies a certain quasi-definiteness condition.  
\begin{definition}\label{definition:augmented-sparse}
	Suppose the matrix $H$ is of the form
	\begin{align}
		H := D + \sum_{i=1}^{n_1}u_iu_i^\top - \sum_{j=1}^{n_2}v_iv_i^\top, \label{augmented-sparse}
	\end{align}
	where $D \in \mathbb{S}^{n \times n}$ is sparse and $n_1,n_2 \ll n$. We will call $H$ \emph{augmented-sparse} if $D - \sum_{j=1}^{n_2}v_iv_i^\top \succ 0$.   The corresponding \emph{augmented sparse matrix} is 
	\begin{align*}
		H_{\mathrm{aug}} := \left[\begin{array}{ccc}
			D & V & U\\
			V^\top & I_{n_2} &\\
			U^\top & & -I_{n_1}
		\end{array}\right]
	\end{align*}
	with $V = [v_1, \dots, v_{n_2}], U = [u_1, \dots, u_{n_1}]$.
\end{definition}

\begin{lemma}
If $H$ is augmented-sparse then $H_{\mathrm{aug}}$ is quasidefinite.
\end{lemma}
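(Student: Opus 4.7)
The plan is to verify the definition of quasidefiniteness directly by grouping the rows and columns of $H_{\mathrm{aug}}$ into a $2\times 2$ block structure whose diagonal blocks are, respectively, positive definite and negative definite. A symmetric matrix is quasidefinite precisely when such a block partition exists, so the proof reduces to establishing positive definiteness of the ``upper'' block via a Schur complement argument that uses the hypothesis $D - \sum_{j=1}^{n_2} v_j v_j^\top \succ 0$ in an essential way.

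First I would group the $D$-block and the $I_{n_2}$-block together. Writing
\begin{align*}
H_{\mathrm{aug}} = \begin{bmatrix} E & W \\ W^\top & -I_{n_1} \end{bmatrix}, \quad
E := \begin{bmatrix} D & V \\ V^\top & I_{n_2} \end{bmatrix}, \quad
W := \begin{bmatrix} U \\ 0 \end{bmatrix},
\end{align*}
the lower-right block $-I_{n_1}$ is trivially negative definite, so it remains to show $E \succ 0$.

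For this, I would invoke the standard Schur complement criterion: since $I_{n_2} \succ 0$, we have $E \succ 0$ if and only if its Schur complement with respect to the $(2,2)$-block is positive definite, namely
\begin{align*}
D - V I_{n_2}^{-1} V^\top = D - V V^\top = D - \sum_{j=1}^{n_2} v_j v_j^\top \succ 0,
\end{align*}
which is exactly the hypothesis in Definition~\ref{definition:augmented-sparse}. Hence $E \succ 0$, and together with $-I_{n_1} \prec 0$ this shows $H_{\mathrm{aug}}$ is quasidefinite.

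There is essentially no obstacle here: the lemma is a one-line Schur complement verification once the correct $2\times 2$ partition has been chosen. The only subtlety worth a sentence of commentary is the choice of grouping: the rank-$n_2$ downdate terms must be absorbed into the positive block alongside $D$ (and paired with the identity $I_{n_2}$), while the rank-$n_1$ update terms are absorbed into the coupling $W$ against the negative identity $-I_{n_1}$. This grouping is what makes the positivity hypothesis on $D - \sum_j v_j v_j^\top$ appear as precisely the Schur complement condition required.
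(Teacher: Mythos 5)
Your proof is correct and follows essentially the same route as the paper: partition $H_{\mathrm{aug}}$ so that the $D$ and $I_{n_2}$ blocks form the positive-definite diagonal block and $-I_{n_1}$ the negative-definite one, then establish positive definiteness of the former via the Schur complement $D - VV^\top \succ 0$, which is exactly the augmented-sparsity hypothesis. The paper's proof is just a terser statement of this same argument (citing the Vanderbei criterion), so no further comparison is needed.
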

\begin{proof}
 A sufficient condition is for the upper left 2$\times$2 block of $H$ % $\begin{bmatrix} D & V \\ V^\top & I_{n_2} \end{bmatrix}$ 
 to be positive definite \cite{Vanderbei95}.  This follows immediately since the Schur complement $D - VV^T$ is positive definite by definition. 
\end{proof}
If $H_s$ in~\eqref{definition-K} is augmented-sparse, we can solve~\eqref{reduced-KKT-1} via an equivalent linear system. For example, the last row of the left equation in~\eqref{reduced-KKT-1}, i.e. $Ax - H_s z_1 = d_s - d_z$ becomes
\begin{align}
	\left[\begin{array}{c}
		A \\ 0 \\ 0
	\end{array}\right] \Delta x -
	\underbrace{\left[\begin{array}{ccc}
			D & V & U\\
			V^\top & I_{n_2} &\\
			U^\top & & -I_{n_1}
		\end{array}\right]}_{H_{s,\mathrm{aug}}}
	\left[\begin{array}{c}
		\Delta z_1 \\ t_v \\ t_u
	\end{array}\right]
	= \left[\begin{array}{c}
		d_s - d_z\\ 0 \\ 0
	\end{array}\right],\label{rank-expansion}
\end{align}
which is both larger and sparser than the equivalent term in~\eqref{reduced-KKT-1}. The same applies to the last row of the right equation in~\eqref{reduced-KKT-1}. 
For~\eqref{rank-expansion}, the number of nonzero entries in $H_{s,\mathrm{aug}}$ is $[\mathrm{nnz}(D) + n(n_1+n_2+2)]$, which is much smaller than the number of entries in a dense $H$ if $n_1,n_2 \ll n$.  We therefore expect that an  LDL factorization of $H_{\mathrm{aug}}$ will have significantly sparser factors than would be obtained by direct factorization of $H$. Such a property has been exploited for SOCPs in ECOS under the NT (primal-dual) scaling strategy~\cite{Domahidi13}. We will show how this approached can also be utilized in the dual scaling strategy for nonsymmetric cones.

\section{Sparse Decomposition of $H_s$ in Nonsymmetric Cones}\label{section-sparse-decomposition}
In this section, we describe several non-symmetric cones that can be shown to have \textit{augmented-sparse} structure in the sense of Definition~\ref{definition:augmented-sparse}. We introduce fundamental definitions and barrier functions for these nonsymmetric cones in Section~\ref{subsection:intro-nonsymmetric cones}, and then detail the underlying augmented-sparse structures in Section~\ref{subsection:generalized-power}-\ref{subsection:relative-entropy-cone}, respectively.

\subsection{Three nonsymmetric cones}\label{subsection:intro-nonsymmetric cones}
We consider three nonsymmetric cones that can exploit sparse structure: generalized power cones, power mean cones and relative entropy cones. They are defined as follows:

\begin{definition}[Generalized Power Cone]
The \emph{generalized power cone} is parametrized by $\alpha \in \mathbb{R}_{++}^{d_1}$ such that $\sum_{i \in [d_1]} \alpha_i=1$ and is defined as 
\begin{subequations}
	\begin{align}
	\mathcal{K}_{\mathrm{gpow}(\alpha,d_1,d_2)} &= \left\{(u, w) \in \mathbb{R}_{+}^{d_1} \times \mathbb{R}^{d_2}: \prod_{i \in [d_1]} u_i^{\alpha_i} \geq \|w\| \right\}.
\intertext{Its dual cone is}
	\mathcal{K}_{\mathrm{gpow}(\alpha,d_1,d_2)}^* &= \left\{(u, w) \in \mathbb{R}_{+}^{d_1} \times \mathbb{R}^{d_2}: \prod_{i \in [d_1]}\left(\frac{u_i}{\alpha_i}\right)^{\alpha_i} \geq \|w\| \right\}.
\end{align}
\end{subequations}
\end{definition}

\begin{definition}[Power Mean Cone]
 The \emph{power mean cone} is parametrized by $\alpha \in \mathbb{R}_{++}^{d_1}$ such that $\sum_{i \in [d]} \alpha_i=1$ and is defined as 
\begin{subequations}
\begin{align}
	\mathcal{K}_{\mathrm{powm}(\alpha,d)} &= \left\{(u, w) \in \mathbb{R}_{+}^{d} \times \mathbb{R}: \prod_{i \in [d]} u_i^{\alpha_i} \geq w \right\}.
\intertext{Its dual cone is}
	\mathcal{K}_{\mathrm{powm}(\alpha,d)}^* &= \left\{(u, w) \in \mathbb{R}_{+}^{d} \times \mathbb{R}_-: \prod_{i \in [d]}\left(\frac{u_i}{\alpha_i}\right)^{\alpha_i} \geq -w \right\}.
\end{align}
\end{subequations}
\end{definition}

Note that the geometric mean cone~\cite{Hypatia} is a special case of the  power mean cone where $\alpha_1 = \dots = \alpha_d = \frac{1}{d}$.

\begin{definition}[Relative Entropy Cone]
The \emph{relative entropy cone} is defined as 
\begin{subequations}
\begin{align}
	\mathcal{K}_{\mathrm{rel}} &= \text{cl}\left\{(u,v,w) \in \mathbb{R} \times \mathbb{R}_{++}^d \times \mathbb{R}_{++}^d : u \ge \sum_{i \in [d]} w_i \ln\left(\frac{w_i}{v_i}\right) \right\}.\label{entropy-cone}
\intertext{Its dual cone is}
	\mathcal{K}_{\mathrm{rel}}^* &= \text{cl}\left\{(u,v,w) \in \mathbb{R}_{++} \times \mathbb{R}_{++}^d \times \mathbb{R}^d : w_i \ge u\left(\ln\left(\frac{u}{v_i}\right)-1\right), \forall i \in [d] \right\}.
\end{align}
\end{subequations}
\end{definition}
Note that a generalized power cone reduce to a second-order cone when $d_1=1$ and it becomes self-dual, i.e.\ $\mathcal{K}_{\text {gpow }(1,1,d_2)} = \mathcal{K}_{\text {gpow }(1,1,d_2)}^*$, while it looks similar to a power mean cone when $d_2=1$. This motivates us to consider whether we can exploit sparsity of these nonsymmetric cones using an approach similar to the one that was  described for second-order cones in~\cite{Domahidi13}. The answer is \emph{yes} if we choose the dual scaling strategy for these cones in a IPM.  Meanwhile, the dual of the relative entropy cone has a sparse Hessian that we can exploit and which also fits our Definition~\ref{definition:augmented-sparse}. 

We therefore choose the dual scaling strategy in our IPM throughout the paper. We consequently must define $\nu$-LHSCB conjugate barrier functions for each of the cones $\mathcal{K}_{\text {gpow }(\alpha,d_1,d_2)}^*$, $\mathcal{K}_{\text {powm}(\alpha,d)}^*$ and $\mathcal{K}_{\mathrm{rel}}^*$. The results are summarized in the next theorem.
\begin{theorem}\label{theorem-barrier}
	For the dual cones of generalized power cones, power mean cones and relative entropy cones:
	\begin{enumerate}[i)]
		\item The function
		\begin{align}
			f_{\text{gpow}}^*(u, w)=-\ln \left(\prod_{i \in [d_1]} \left(\frac{u_i}{\alpha_i}\right)^{2 \alpha_i}- \|w\|^2\right)-\sum_{i \in [d_1]}\left(1-\alpha_i \right) \ln \left(\frac{u_i}{\alpha_i}\right) \label{dual-barrier-genpow}
		\end{align}
		is a $(d_1+1)$-LHSCB function of $\mathcal{K}^*_{\text {gpow }(\alpha,d_1,d_2)}$ where $\alpha, u \in \mathbb{R}_{++}^{d_1}, w \in \mathbb{R}^{d_2}$. 
		\bigskip
		\item The function
		\begin{align}
			f_{\mathrm{powm}}^*(u, w)=-\ln \left(\prod_{i \in [d]} \left(\frac{u_i}{\alpha_i}\right)^{\alpha_i}+ w\right)-\sum_{i \in [d]} (1-\alpha_i)\ln \left(\frac{u_i}{\alpha_i}\right) - \ln(-w) \label{dual-barrier-powm}
		\end{align}
		is a $(d+1)$-LHSCB function of $\mathcal{K}^*_{\mathrm{powm}(\alpha,d)}$ where $\alpha, u \in \mathbb{R}_{++}^{d}, w < 0$. 
		\bigskip
		\item 	 The function
		\begin{align}
			f_{\mathrm{rel}}^*(u,v,w) = -\sum_{i \in [d]}\left(w_i - u\ln\left(\frac{u}{v_i}\right)+u\right) - d\ln(u) - \sum_{i \in [d]}\ln(v_i) \label{dual-barrier-rel}
		\end{align}
		is a $3d$-LHSCB function of $\mathcal{K}^*_{\mathrm{rel}}$ where $u > 0$ and $v \in \mathbb{R}_{++}^{d},w \in \mathbb{R}^{d}$.
	\end{enumerate}
\end{theorem}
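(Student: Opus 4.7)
The plan is to verify, for each of the three claimed barriers, both the logarithmic-homogeneity identity $f^*(\lambda x) = f^*(x) - \nu\ln\lambda$ and the self-concordance inequality in~\eqref{self-concordant}. Logarithmic homogeneity is in each case an elementary computation: substituting $\lambda(u,w)$ (respectively $\lambda(u,v,w)$) and extracting one factor of $\lambda$ from the argument of each $\ln$, and using $\sum_i \alpha_i = 1$ where applicable, yields $2 + (d_1-1) = d_1+1$ in~(i), $1 + (d-1) + 1 = d+1$ in~(ii), and $d+d+d = 3d$ in~(iii), matching the claimed parameters.

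For self-concordance I would handle the three parts differently. For part~(i), I would introduce the linear bijection $T:(u,w) \mapsto ((u_i/\alpha_i)_{i\in[d_1]}, w)$, which by definition of the dual cone maps $\mathrm{int}\,\mathcal{K}^*_{\mathrm{gpow}(\alpha,d_1,d_2)}$ onto $\mathrm{int}\,\mathcal{K}_{\mathrm{gpow}(\alpha,d_1,d_2)}$, and observe that the proposed dual barrier is exactly the $(d_1+1)$-LHSCB primal barrier of Roy and Xiao~\cite{Roy22} pre-composed with $T$. Since both self-concordance and the LHSCB parameter are invariant under composition with a linear isomorphism of cones, the claim follows without further calculation. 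For part~(iii), I would instead exploit the separable structure $f^*_{\mathrm{rel}}(u,v,w) = \sum_{i\in[d]} h_i(u,v_i,w_i)$ with $h_i(u,v_i,w_i) := -\ln(w_i + u - u\ln(u/v_i)) - \ln u - \ln v_i$. Each $h_i$ depends on only the three coordinates $(u, v_i, w_i)$ and, after the affine change $(a,b,c) := (-u, w_i, v_i)$, reduces to the standard $3$-LHSCB barrier for the dual exponential cone established in~\cite{Skajaa15,Papp17}. The additive rule for LHSCBs then yields a $3d$-LHSCB for $\bigcap_i\{u>0, v_i>0, w_i > u\ln(u/v_i) - u\} = \mathrm{int}\,\mathcal{K}^*_{\mathrm{rel}}$; positive definiteness of $\sum_i \nabla^2 h_i$ follows from the block pattern, since contributions to $\partial_u^2$ accumulate across all $i$ while each $(v_i,w_i)$-block is controlled by a single $h_i$.

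Part~(ii) is the main obstacle. The analogous substitution $T:(u,w)\mapsto ((u_i/\alpha_i)_{i\in[d]},-w)$ identifies $\mathrm{int}\,\mathcal{K}^*_{\mathrm{powm}(\alpha,d)}$ with the subcone $\mathcal{C}_+ := \{(\tilde u,\tilde w) : \tilde u > 0,\,\tilde w > 0,\,\prod_i \tilde u_i^{\alpha_i} > \tilde w\}$, on which the transformed barrier reads $F(\tilde u,\tilde w) = -\ln(\prod_i \tilde u_i^{\alpha_i} - \tilde w) - \sum_i (1-\alpha_i)\ln \tilde u_i - \ln\tilde w$. Unlike in parts~(i) and~(iii), $F$ is not the pullback of a standard known barrier: both the extra $-\ln\tilde w$ term and the non-uniform coefficients $1-\alpha_i$ differ from the conventional primal barrier $-\ln(\prod_i \tilde u_i^{\alpha_i} - \tilde w) - \sum_i \ln \tilde u_i$ of $\mathcal{K}_{\mathrm{powm}}$. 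I would therefore verify self-concordance by direct computation of $\nabla^2 F$ and $\nabla^3 F[h,h,h]$. Writing $g(\tilde u, \tilde w) = \prod_i \tilde u_i^{\alpha_i} - \tilde w$, the Hessian $\nabla^2[-\ln g]$ is a rank-one perturbation of a diagonal matrix, and combined with the diagonal contributions from the remaining $-\ln$ terms this should reduce the third-order bound $|\nabla^3 F[h,h,h]| \le 2(\nabla^2 F[h,h])^{3/2}$ to a chain of Cauchy--Schwarz-type estimates. The delicate step is confirming that the precise coefficients $1-\alpha_i$ are exactly what close the inequality at the optimal parameter $d+1$, rather than at some larger value.
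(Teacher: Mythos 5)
Your parts (i) and (iii) are sound. Part (i) is exactly the paper's argument: pull back the known $(d_1+1)$-LHSCB of Roy--Xiao through the diagonal map $u_i \mapsto u_i/\alpha_i$ and use invariance of self-concordance under linear isomorphisms. Part (iii) reaches the same conclusion by a slightly different route --- you identify each three-dimensional block $h_i(u,v_i,w_i)$ with the standard dual exponential cone barrier under an affine change of variables, whereas the paper builds the $3$-LHSCB property of each block from scratch via Nesterov's compatibility machinery ($A(x,y)=-x\ln(x/y)+x$ being $(\mathbb{R}_+,1)$-compatible with $-\ln x-\ln y$, then Proposition 5.1.7 of Nesterov); both then invoke the summation rule to get $3d$. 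Your route is shorter if one accepts the dual exponential cone barrier as known; the paper's is self-contained modulo the compatibility lemma.

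Part (ii), however, contains a genuine gap, and it stems from a false premise. You assert that the transformed barrier $F(\tilde u,\tilde w)=-\ln\left(\prod_i \tilde u_i^{\alpha_i}-\tilde w\right)-\sum_i(1-\alpha_i)\ln\tilde u_i-\ln\tilde w$ on the nonnegative power cone $\{\tilde u>0,\ \tilde w>0,\ \prod_i\tilde u_i^{\alpha_i}>\tilde w\}$ ``is not the pullback of a standard known barrier,'' and you therefore fall back on a direct verification of $|\nabla^3F[h,h,h]|\le 2(\nabla^2F[h,h])^{3/2}$ that you only sketch. In fact this $F$ --- with precisely the non-uniform coefficients $1-\alpha_i$ and the extra $-\ln\tilde w$ term --- is exactly the $(d+1)$-LHSCB for the nonnegative power cone established in Theorem 2 of Roy--Xiao~\cite{Roy22}, and the paper's proof of part (ii) consists of nothing more than citing that result and composing with the linear map $\mathrm{diag}(1/\alpha_1,\dots,1/\alpha_d,-1)$, in complete parallel with part (i). Without that citation, the burden you take on is substantial: closing the third-order inequality at the optimal parameter $d+1$ for this family of barriers is the main technical content of~\cite{Roy22} (resolving a conjecture from Chares's thesis), and ``a chain of Cauchy--Schwarz-type estimates'' with an acknowledged but unexecuted ``delicate step'' does not constitute a proof. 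As written, part (ii) of your argument is incomplete; either supply the full third-order computation or, more economically, recognize $F$ as the known barrier and reduce part (ii) to the same two-line argument as part (i).
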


\begin{proof}
See Appendix~\ref{appendix-proof}. 
\end{proof}

\subsection{Sparsity exploitation}
For the dual-scaling strategy at iteration $k$, the scaling matrix $H_s$ is set to $H_s^k = \mu^k H^*(z^k)$, where $\mu^k > 0$ is the centering parameter and $H^*(z^k)$ is the Hessian of the dual barrier functions at $z^k \in \mathcal{K}^*$. 
Hence, \textit{we can exploit the augmented-sparse structure of the scaling matrix $H_s$ as long as $H^*(z^k)$ is augmented-sparse}. For the remainder of this Section, we will show that the Hessians $H^*(z)$ of each of the dual barrier functions defined in Theorem~\ref{theorem-barrier} are all augmented-sparse, and thus so are the related scaling matrices $H_s$.

\subsubsection{Generalized power cone}\label{subsection:generalized-power}
We start by establishing the augmented-sparse property for generalized power cones:
\begin{theorem}\label{theorem-genpow-KKT}
	The Hessian of the barrier function~\eqref{dual-barrier-genpow} for the generalized power cone satisfies Definition~\ref{definition:augmented-sparse} with $n_1 = 1, n_2 = 2$, i.e.
	\begin{align*}
		H^*(z) = D + pp^\top - qq^\top - rr^\top,
	\end{align*}
	where $z=:(u,w)$ and $D - qq^\top - rr^\top \succ 0$. The parameters $D, p, q, r$ are given by
	\begin{subequations}\label{generalized-power-mean-param}
	\begin{align}
		\begin{array}{c}
			D = \left[\begin{array}{ccc|c}
				\ddots & & & \\ 
				& \frac{\tau_i \varphi}{\zeta u_i} + \frac{1-\alpha_i}{u_i^2} & & \\
				& & \ddots & \\
				\hline
				& & & \frac{2}{\zeta} \cdot I_{d_2}
			\end{array}\right], 
			\ p = \left[\begin{array}{c}
				p_0 \cdot \frac{\tau}{\zeta}\\ p_1 \cdot \frac{w}{\zeta}
			\end{array}\right],
			\ q = \left[\begin{array}{c}
				q_0 \cdot \frac{\tau}{\zeta} \\ 0
			\end{array}\right], 
			\ r = \left[\begin{array}{c}
				0 \\ r_1 \cdot \frac{w}{\zeta}
			\end{array}\right], \\
		\end{array}
	\end{align}
	with 
	\begin{equation}
		\begin{aligned}[t]
			p_0 &= \sqrt{\frac{\varphi(\varphi+\|w\|^2)}{2}}, 
			&&& p_1 &= %\frac{-2\varphi}{p_0} = 
			-2\sqrt{\frac{2\varphi}{\varphi+\|w\|^2}}, \\[1ex]
			q_0 &= \sqrt{\frac{\zeta \varphi}{2}},
			&&& r_1 &= 2\sqrt{\frac{\zeta}{\varphi+\|w\|^2}},
		\end{aligned}
	\end{equation}
	\end{subequations}

	where $\varphi=\prod_{i \in [d_1]} \left(\frac{u_i}{\alpha_i}\right)^{2 \alpha_i}, \tau_i=\frac{2 \alpha_i}{u_i}, \forall i \in [d_1]$, and $\zeta=\prod_{i \in [d_1]} \left(\frac{u_i}{\alpha_i}\right)^{2 \alpha_i}- \|w\|^2$.
\end{theorem}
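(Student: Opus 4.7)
The plan is to compute the Hessian $H^*(z)$ of $f^*_{\mathrm{gpow}}$ directly from \eqref{dual-barrier-genpow}, identify its diagonal and rank-one pieces in the $(u,u)$, $(w,w)$ and $(u,w)$ blocks, and then match the rank-one residual against $pp^\top - qq^\top - rr^\top$ for $p,q,r$ of the prescribed block shape. Throughout I would keep the shorthand $\varphi$, $\zeta$, $\tau_i$ from the statement, together with the basic identities $\partial\varphi/\partial u_i = \tau_i \varphi$, $\partial\zeta/\partial u_i = \tau_i \varphi$ and $\partial\zeta/\partial w_j = -2 w_j$, all of which follow from $\ln\varphi = 2\sum_i \alpha_i \ln(u_i/\alpha_i)$.

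A single chain-rule pass yields $\nabla_u f^* = -\varphi\tau/\zeta - \bigl((1-\alpha_i)/u_i\bigr)_i$ and $\nabla_w f^* = 2w/\zeta$. Differentiating a second time and tracking the two contributions in $\partial(\varphi/\zeta)/\partial u_k = \tau_k\varphi/\zeta - \tau_k\varphi^2/\zeta^2$ gives the block entries
\begin{align*}
H^*_{u_i u_k} &= \delta_{ik}\!\left[\tfrac{\tau_i \varphi}{\zeta u_i} + \tfrac{1-\alpha_i}{u_i^2}\right] + \tfrac{\varphi\|w\|^2}{\zeta^2}\,\tau_i \tau_k, \\
H^*_{w_j w_l} &= \tfrac{2}{\zeta}\,\delta_{jl} + \tfrac{4}{\zeta^2}\, w_j w_l, \\
H^*_{u_i w_j} &= -\tfrac{2\varphi}{\zeta^2}\,\tau_i w_j.
\end{align*}
The Kronecker pieces above are already exactly the matrix $D$ in~\eqref{generalized-power-mean-param}, so the theorem reduces to showing that the three rank-one residuals assemble into $pp^\top-qq^\top-rr^\top$ for the prescribed $p,q,r$.

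Since $q$ touches only the $u$-coordinates and $r$ only the $w$-coordinates, substituting the proposed shapes produces three scalar matching conditions: $p_0^2 - q_0^2 = \varphi\|w\|^2$ from the $(u,u)$ coupling, $p_1^2 - r_1^2 = 4$ from the $(w,w)$ coupling, and $p_0 p_1 = -2\varphi$ from the $(u,w)$ cross coupling. Each is verified by direct substitution using $\varphi-\zeta = \|w\|^2$: one gets $\varphi(\varphi+\|w\|^2)/2 - \varphi\zeta/2 = \varphi\|w\|^2$, $(8\varphi - 4\zeta)/(\varphi+\|w\|^2) = 4$, and the square-root product simplifies to $-2\sqrt{\varphi^2}=-2\varphi$. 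This one-parameter freedom in the choice of $p_0^2$ is precisely what makes the prescribed split into positive and negative rank-one terms feasible with real, positive coefficients.

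Finally, the quasi-definiteness requirement $D - qq^\top - rr^\top \succ 0$ decouples between the $u$ and $w$ supports. For the $w$ block, a Sherman--Morrison step reduces positivity to $r_1^2 \|w\|^2 < 2\zeta$, which after substitution becomes $\|w\|^2 < \varphi$, i.e.\ $\zeta > 0$. For the $u$ block the analogous condition $q_0^2\,\tau^\top D_{uu}^{-1}\tau < \zeta^2$ expands, after inverting the diagonal, to $\sum_i 2\varphi\alpha_i^2 / \bigl[2\alpha_i\varphi + (1-\alpha_i)\zeta\bigr] < 1$, which I would recognize as a convex combination $\sum_i \alpha_i S_i$ with $S_i := 2\varphi\alpha_i/[2\alpha_i\varphi+(1-\alpha_i)\zeta]$ strictly less than $1$ whenever $\alpha_i < 1$ (using $\zeta>0$); combined with $\sum_i \alpha_i = 1$ this gives the strict inequality. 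The main obstacles are the bookkeeping of the chain rule through the quotient $\varphi/\zeta$, and, once the decomposition is in hand, this final quasi-definiteness argument for the $u$-block, which is the only step that genuinely invokes the simplex constraint on $\alpha$.
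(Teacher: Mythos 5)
Your proposal is correct and follows essentially the same route as the paper's proof: compute the Hessian, peel off the Kronecker-delta part as $D$, match the three rank-one coefficients via $p_0^2-q_0^2=\varphi\|w\|^2$, $p_0p_1=-2\varphi$, $p_1^2-r_1^2=4$, and reduce $D-qq^\top-rr^\top\succ 0$ by Schur complement to the same two scalar inequalities. Your convex-combination phrasing $\sum_i\alpha_i S_i<1$ for the $u$-block is just a repackaging of the paper's term-by-term bound obtained by dropping $(1-\alpha_i)\zeta$ from each denominator (both implicitly use $\alpha_i<1$, i.e.\ $d_1\ge 2$), so there is no substantive difference.
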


\begin{proof}[Proof of Theorem~\ref{theorem-genpow-KKT}]
	The gradient and Hessians for $f_{\text{gpow}}^*(u, w)$~\eqref{dual-barrier-genpow} are given by
	\begin{align*}
		\begin{aligned}
			\nabla_{u_i} f^* &=-\frac{\tau_i \varphi}{\zeta}-\frac{1-\alpha_i}{u_i}, \quad \forall i \in [d_1], \\
			\nabla_{w_i} f^* &=\frac{2 w_i}{\zeta}, \quad \forall i \in [d_2], \\
			\nabla_{u_i, u_j}^2 f^* &=\frac{\tau_i \tau_j \varphi}{\zeta}\left(\frac{\varphi}{\zeta}-1\right)+\delta_{i, j}\left(\frac{\tau_i \varphi}{\zeta u_i}+\frac{1-\alpha_i}{u_i^2}\right), \quad \forall i,j \in [d_1], \\
			\nabla_{u_i, w_j}^2 f^* &=-\frac{2 \tau_i \varphi w_j}{\zeta^2}, \quad \forall i \in [d_1], \forall j \in [d_2],\\
			\nabla_{w_i, w_j}^2 f^* &=\frac{4 w_i w_j}{\zeta^2}+\delta_{i, j}\frac{2}{\zeta}, \quad \forall i,j \in [d_2].
		\end{aligned}
	\end{align*}
	In order to satisfy Definition~\ref{definition:augmented-sparse}, 
	$p_0,p_1,q_0,r_1$ need to satisfy the conditions
	\begin{align*}
		p_0^2 - q_0^2 = \varphi(\varphi - \zeta)= \varphi \|w\|^2, \quad p_0 p_1 = -2\varphi, \quad p_1^2 - r_1^2 = 4,
	\end{align*}
	to match the coefficients in the Hessian. Given the parameter choices in~\eqref{generalized-power-mean-param}, we find $D - qq^\top - rr^\top$ is $2 \times 2$ block diagonal and the condition $D - qq^\top - rr^\top \succ 0$ becomes
	\begin{align*}
		& D_1 - q_0^2 \left(\frac{\tau}{\zeta}\right) \cdot \left(\frac{\tau}{\zeta}\right)^\top \succ 0 \text{ and } D_2 - r_1^2\left(\frac{w}{\zeta}\right) \cdot \left(\frac{w}{\zeta}\right)^\top \succ 0,
	\end{align*}
	which is equivalent to
	\begin{subequations}
		\begin{gather}
			1 - \left(\frac{\tau}{\zeta}\right)^\top D_1^{-1} \left(\frac{\tau}{\zeta}\right) = 1 - \sum_{i \in [d_1]}\frac{q_0^2 \cdot \frac{\tau_i^2}{\zeta^2}}{ \frac{\tau_i \varphi}{\zeta u_i} + \frac{1-\alpha_i}{u_i^2}} = 1 - \sum_{i \in [d_1]}\frac{4\alpha_i^2 q_0^2 }{ \zeta(2\alpha_i \varphi + (1-\alpha_i)\zeta)} > 0, \label{theorem2-proof-eq-1}
\intertext{and}
			1 - \left(\frac{w}{\zeta}\right)^\top D_2^{-1} \left(\frac{w}{\zeta}\right) = 1 - r_1^2 \frac{\|w\|^2}{\zeta^2} \cdot \frac{\zeta}{2} > 0, \label{theorem2-proof-eq-2}
		\end{gather}
	\end{subequations}
	by Schur complement.
	If we set $q_0 = \sqrt{\frac{\zeta \varphi}{2}}, r_1 = 2\sqrt{\frac{\zeta}{\varphi+\|w\|^2}}$,~\eqref{theorem2-proof-eq-1} is satisfied by
	\begin{align*}
		1 - \sum_{i \in [d_1]}\frac{4 \alpha_i^2 q_0^2}{\zeta(2 \alpha_i \varphi + (1-\alpha_i)\zeta)} > 1 - \sum_{i \in [d_1]}\frac{4 \alpha_i^2 q_0^2}{2 \zeta \alpha_i \varphi} = 1 - \frac{2 q_0^2}{\zeta \varphi} = 0,
	\end{align*}
	and~\eqref{theorem2-proof-eq-2} is satisfied by
	\begin{align*}
		1 - r_1^2 \frac{\|w\|^2}{\zeta^2} \cdot \frac{\zeta}{2} = 1 - \frac{2 \|w\|^2}{\varphi + \|w\|^2} = \frac{2 \zeta}{\varphi + \|w\|^2} > 0.
	\end{align*}
	Hence, $D - qq^\top - rr^\top \succ 0$.
\end{proof}
Since the scaling matrix $H_s = \mu H^*(z)$ is augmented-sparse, we can employ an expanded sparse linear system
\begin{align}
	\left[\begin{array}{c}
		A \\ 0 \\ 0 \\ 0
	\end{array}\right] \Delta x
	- \underbrace{\mu \left[\begin{array}{cccc}
		D & q & r & p\\
		q^\top & 1 & 0 & 0\\
		r^\top & 0 & 1 & 0\\
		p^\top & 0 & 0 & -1
	\end{array}\right]}_{K_{\text{aug}}}
	\left[\begin{array}{c}
		\Delta z_1 \\ t_q \\ t_r \\ t_p
	\end{array}\right]
	= \left[\begin{array}{c}
		d_s - d_z\\ 0 \\ 0 \\ 0
	\end{array}\right],\label{rank3-expansion}
\end{align}
in our interior point step equation~\eqref{reduced-KKT-1}, 
where $K_{\text{aug}}$ is quasi-definite and strongly factorizable. Instead of~\eqref{rank3-expansion}, we solve the  linear system
\begin{align}
	\left[\begin{array}{c}
		A \\ 0 \\ 0 \\ 0
	\end{array}\right] \Delta x
	- \left[\begin{array}{cccc}
			\mu D & \sqrt{\mu} q & \sqrt{\mu}r & \sqrt{\mu}p\\
			\sqrt{\mu}q^\top & 1 & 0 & 0\\
			\sqrt{\mu}r^\top & 0 & 1 & 0\\
			\sqrt{\mu}p^\top & 0 & 0 & -1
		\end{array}\right]
	\left[\begin{array}{c}
		\Delta z_1 \\ t'_q \\ t'_r \\ t'_p
	\end{array}\right]
	= \left[\begin{array}{c}
		d_s - d_z\\ 0 \\ 0 \\ 0
	\end{array}\right]\label{rank3-expansion-stable},
\end{align}
which yields the same $\Delta x, \Delta z_1$ as~\eqref{rank3-expansion}. During direct factorization we find that the pivoting for~\eqref{rank3-expansion-stable} is more numerically stable than for~\eqref{rank3-expansion} since the magnitude of diagonal terms of $K_{\mathrm{aug}}$ increases from $\mu$ to $1$ and the ratio of off-diagonal to diagonal terms also decreases, e.g. $q$ to $\sqrt{\mu}q$.
%\PGnote{It's not really clear here why this reformulation is more numerical stable than the one that came before.   Is {rank3-expansion-stable} definitely a lot better?   If so, can we explain why in a bit more detail?}
%\YCnote{2}{I have added additional comment to it.}
In addition, since half of entries in $q,r$ are zeros, the memory required for $q,r$ together is the same as $p$ in the sparse data form.

\subsubsection{Power mean cone}\label{subsection:power-mean}
We next establish the augmented-sparse property for the power mean cone:
\begin{theorem}\label{theorem-power-mean}
	The Hessian of the barrier function~\eqref{dual-barrier-powm} for the power mean cone satisfies Definition~\ref{definition:augmented-sparse} with $n_1 = 1, n_2 = 2$, i.e.\
	\begin{align*}
		H^*(z) = D + pp^\top - qq^\top - rr^\top,
	\end{align*}
	where $z=:(u,w)$ and $D - qq^\top - rr^\top \succ 0$. The parameters $D, p, q, r$ are given by
	\begin{subequations}\label{power-mean-param}
	\begin{align}
		\begin{array}{c}
			D = \left[\begin{array}{ccc|c}
				\ddots & & & \\ 
				& \frac{\tau_i \varphi}{u_i} + \frac{1-\alpha_i}{u_i^2} & & \\
				& & \ddots & \\
				\hline
				& & & \theta
			\end{array}\right], 
			\ p = \left[\begin{array}{c}
				p_0 \cdot \tau\\ p_1 \cdot \frac{1}{\zeta}
			\end{array}\right],
			\ q = \left[\begin{array}{c}
				q_0 \cdot \tau \\ 0
			\end{array}\right], 
			\ r = \left[\begin{array}{c}
				0 \\ r_1 \cdot \frac{1}{\zeta}
			\end{array}\right],
		\end{array}
	\end{align}
	with 
	\begin{equation}
	\begin{aligned}[t]
		\theta &= 1 + \frac{1}{w^2}, 
		&&& p_0 &= \varphi, 
		&&& p_1 &= 1, &&& \\[1ex]
		&
		&&& q_0 &= \sqrt{\zeta \varphi},
		&&& r_1 &= \zeta,
	\end{aligned}
	\end{equation}
\end{subequations}
	where we define $\varphi=\prod_{i \in [d]} \left(\frac{u_i}{\alpha_i}\right)^{\alpha_i}$, $\zeta=\varphi+ w$ and $\tau \in \mathbb{R}^d$ with $\tau_i=\frac{\alpha_i}{u_i \zeta}, \forall i \in [d]$.
\end{theorem}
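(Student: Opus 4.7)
The plan is to mirror the proof of Theorem~\ref{theorem-genpow-KKT} closely: first explicitly compute the Hessian of $f^*_{\mathrm{powm}}$, then verify that the proposed decomposition reproduces it block by block, and finally establish the positive-definiteness condition $D - qq^\top - rr^\top \succ 0$.

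I would begin by computing the partial derivatives of \eqref{dual-barrier-powm} via the chain rule, using the intermediate quantities $\varphi,\zeta,\tau$ as in the theorem statement. Since $\partial_{u_j}\varphi = \varphi\alpha_j/u_j$ and $\partial_w \zeta = 1$, routine differentiation yields the three Hessian blocks
\begin{align*}
  \nabla^2_{u_i u_j} f^*_{\mathrm{powm}} &= -w\varphi\,\tau_i\tau_j + \delta_{ij}\Bigl(\frac{\tau_i\varphi}{u_i} + \frac{1-\alpha_i}{u_i^2}\Bigr), \\
  \nabla^2_{u_i w} f^*_{\mathrm{powm}} &= \frac{\varphi\tau_i}{\zeta}, \qquad \nabla^2_{ww} f^*_{\mathrm{powm}} = \frac{1}{\zeta^2} + \frac{1}{w^2}.
\end{align*}
Note that the rank-one correction in the $uu$-block carries a coefficient $-w\varphi > 0$ (since $w<0$ in $\mathrm{int}\,\mathcal{K}^*_{\mathrm{powm}}$), so it is a \emph{positive} rank-one update, in contrast to the generalized power cone case where a negative update appears.

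Matching against $D + pp^\top - qq^\top - rr^\top$ reduces to a handful of scalar identities. The $uu$-block requires $p_0^2 - q_0^2 = -w\varphi$, satisfied by $p_0 = \varphi$ and $q_0 = \sqrt{\zeta\varphi}$ via $\varphi^2 - \zeta\varphi = \varphi(\varphi-\zeta) = -w\varphi$; the cross-block requires $p_0 p_1 = \varphi$, satisfied by $p_1 = 1$; and the $(w,w)$-entry requires $\theta + (p_1^2 - r_1^2)/\zeta^2 = 1/\zeta^2 + 1/w^2$, fixing $r_1 = \zeta$ and $\theta = 1 + 1/w^2$. These are all mechanical verifications.

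The main obstacle is the final step, verifying $D - qq^\top - rr^\top \succ 0$. Because $q$ is supported only on the $u$-coordinates and $r$ only on the $w$-coordinate, this matrix is block-diagonal; the lower-right scalar is $\theta - r_1^2/\zeta^2 = 1/w^2 > 0$. For the upper-left block $D_1 - q_0^2\,\tau\tau^\top$, where $D_1$ is diagonal and positive, a Schur complement reduces the task to $1 > q_0^2 \sum_i \tau_i^2/D_{1,ii}$. Substituting $\tau_i^2 = \alpha_i^2/(u_i^2\zeta^2)$ and $D_{1,ii} = c_i/(u_i^2\zeta)$ with $c_i := \alpha_i\varphi + (1-\alpha_i)\zeta > 0$, this becomes $1 > \sum_i \alpha_i^2\varphi/c_i$. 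The key algebraic identity is $\alpha_i\varphi/c_i = 1 - (1-\alpha_i)\zeta/c_i$, which combined with $\sum_i\alpha_i = 1$ telescopes to
\[
1 - \sum_i \frac{\alpha_i^2\varphi}{c_i} = \zeta\sum_i \frac{\alpha_i(1-\alpha_i)}{c_i} > 0,
\]
strictly positive whenever $d\ge 2$ (so that at least one $\alpha_i\in(0,1)$). Recognizing this telescoping identity is the only non-routine step; everything else is a direct repetition of the pattern already developed for the generalized power cone.
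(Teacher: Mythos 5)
Your proposal is correct and follows essentially the same route as the paper: compute the Hessian blocks, match the scalar coefficients $p_0^2-q_0^2=-w\varphi$, $p_0p_1=\varphi$, $\theta+(p_1^2-r_1^2)/\zeta^2=1/\zeta^2+1/w^2$, and reduce $D-qq^\top-rr^\top\succ 0$ to a scalar Schur-complement inequality on the block-diagonal pieces. The only (immaterial) difference is the last algebraic step, where the paper bounds $\alpha_i\varphi+(1-\alpha_i)\zeta>\alpha_i\varphi$ to conclude the sum is strictly less than $1$, whereas you evaluate the gap exactly as $\zeta\sum_i\alpha_i(1-\alpha_i)/c_i$ — both rely on $\zeta>0$ and $\alpha_i<1$ for strictness.
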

\begin{proof}[Proof of Theorem~\ref{theorem-power-mean}]
	The gradient of~\eqref{dual-barrier-powm} is
	\begin{align*}
		\nabla_{u_i} f^*=-\tau_i \varphi-\frac{1-\alpha_i}{u_i}, \ \forall i \in [d], \quad \nabla_{w} f^* = -\frac{1}{\zeta} - \frac{1}{w}, 
	\end{align*}
	and the corresponding Hessian is
	\begin{align}
	\begin{aligned}
				\nabla_{u_i, u_j}^2 f^* &= -\varphi w \tau_i \tau_j+\delta_{i, j}\left(\frac{\tau_i \varphi}{u_i}+\frac{1-\alpha_i}{u_i^2}\right), \quad \forall i,j \in [d],\\
		\nabla_{u_i, w}^2 f^* &=\frac{\tau_i \varphi}{\zeta}, \quad \forall i \in [d],\\
		\nabla_{w, w}^2 f^* &=\frac{1}{\zeta^2} + \frac{1}{w^2}.
	\end{aligned}\label{pown-hessian}
	\end{align}
	Our choices for $\theta,p_0,p_1,q_0,r_1$ satisfy
	\begin{align*}
		p_0^2 - q_0^2 = \varphi(\varphi - \zeta)= -\varphi w, \quad p_0 p_1 = \varphi, \quad \theta + \frac{p_1^2}{\zeta^2} - \frac{r_1^2}{\zeta^2} = \frac{1}{\zeta^2} + \frac{1}{w^2},
	\end{align*}
	which match coefficients in the dual Hessian~\eqref{pown-hessian}. Since $D - qq^\top - rr^\top$ is $2 \times 2$ block diagonal, the condition $D - qq^\top - rr^\top \succ 0$ is equivalent to $\theta-\frac{r_1^2}{\zeta^2} = \frac{1}{w^2} > 0 $ and
	\begin{align*}
		D_1 - q_0^2 \cdot \tau \tau^\top \succ 0, \iff 1 - \tau^\top D_1^{-1} \tau > 0,
	\end{align*}
	which can be shown by writing
	\begin{align*}
		1 - \tau^\top D_1^{-1} \tau & = 1 - \sum_{i \in [d]}\frac{q_0^2 \tau_i^2}{ \frac{\tau_i \varphi}{u_i} + \frac{1-\alpha_i}{u_i^2}} = 1 - \sum_{i \in [d]}\frac{\alpha_i^2 q_0^2 }{ \zeta(\alpha_i \varphi + (1-\alpha_i)\zeta)} \\
		& > 1 - \sum_{i \in [d]}\frac{\alpha_i^2 q_0^2 }{ \zeta \alpha_i \varphi} = 1 - \frac{q_0^2 }{ \zeta \varphi} = 0.
	\end{align*}
	Hence, $D - qq^\top - rr^\top \succ 0$.
\end{proof}
Note that the augmented-sparse structure of power mean cones is similar to the case of generalized power cones in Theorem~\ref{theorem-power-mean}, where both are composed of a diagonal matrix plus a rank-$3$ update and the memory requirement for the rank-$3$ update is equivalent to a rank-$2$ update due to the presence of zeros in $q,r$. Therefore, we can use an expanded sparse linear system as~\eqref{rank3-expansion} with a numerically stable variant similar to~\eqref{rank3-expansion-stable} for interior point step equations for the power mean cone.

\subsubsection{Relative entropy cone}\label{subsection:relative-entropy-cone}
The relative entropy cone has a slightly different structure than generalized power cones and power mean cones, but it still satisfies the augmented-sparse property as in Definition~\ref{definition:augmented-sparse}.
\begin{theorem}\label{theorem-rel-entropy}
	The Hessian of the barrier function~\eqref{dual-barrier-rel} for the relative entropy cone satisfies Definition~\ref{definition:augmented-sparse} with $n_1 = n_2 = 0$, i.e.\
	\begin{align*}
		\begin{array}{c}
			H^*(z) = \left[\begin{array}{ccc}
				\nabla_{u, u}^2 f^* & \nabla_{u, v}^2 f^* & \nabla_{u, w}^2 f^*\\ 
				\nabla_{v,u}^2 f^* & D_{v,v} & D_{v,w} \\
				\nabla_{w,u}^2 f^* & D_{w,v} & D_{w,w}
			\end{array}\right], 
		\end{array}
	\end{align*}
	where $z=:(u,v,w)$ and $H^*(z) \succ 0$ sparse for $d \gg 1$.
	The lower right $2\times 2$ blocks of $H^*(z)$ are given by the diagonal matrices 
	\begin{equation}
	\begin{aligned}
	D_{v,v} &= \mathrm{Diag}\left(\left[\frac{u(\gamma_1+u)}{\gamma_1^2v_1^2} + \frac{1}{v_1^2}, \dots, \frac{u(\gamma_d+u)}{\gamma_d^2v_d^2} + \frac{1}{v_d^2}\right]\right), \\[1ex]
	 D_{w,w} &= \mathrm{Diag}\left(\left[\frac{1}{\gamma_1^2},\dots, \frac{1}{\gamma_d^2}\right]\right), \\[1ex]
	 D_{v,w} = D_{w,v} &= \mathrm{Diag}\left(\left[\frac{u}{\gamma_1^2v_1}, \dots, \frac{u}{\gamma_d^2v_d}\right]\right), 
	\end{aligned}
	\end{equation}
	with $\gamma_i = w_i - u\ln\left(\frac{u}{v_i}\right)+u > 0, \forall i \in [d]$.
\end{theorem}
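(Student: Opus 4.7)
The plan is to compute the Hessian of $f^*_{\mathrm{rel}}$ directly via the chain rule, exploiting the crucial structural observation that $\gamma_i = w_i - u\ln(u/v_i) + u$ depends only on the $i$-th components of $v$ and $w$. This coordinate-separability of the nonlinear term immediately forces the lower right $2\times 2$ blocks to be diagonal; the explicit entries then drop out of routine partial differentiation.

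Concretely, I would decompose $f^* = -\sum_{i \in [d]}\ln(\gamma_i) - d\ln u - \sum_{i \in [d]}\ln v_i$ and apply the identity $\nabla^2(-\ln\gamma_i) = \gamma_i^{-2}\nabla\gamma_i\nabla\gamma_i^\top - \gamma_i^{-1}\nabla^2\gamma_i$. First I would record the first-order derivatives $\partial\gamma_i/\partial v_j = \delta_{ij}(u/v_i)$, $\partial\gamma_i/\partial w_j = \delta_{ij}$, and the only nonvanishing second derivative among the $v,w$ variables, $\partial^2\gamma_i/\partial v_j\partial v_k = -\delta_{ij}\delta_{ik}(u/v_i^2)$. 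These Kronecker-delta patterns collapse every cross-index coupling, so in each of the sub-blocks $(v,v)$, $(v,w)$, $(w,w)$ only diagonal entries survive.

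Summing the surviving contributions over $i$ then gives the stated formulas by direct substitution: for $(w,w)$ only the outer-product term remains, producing $1/\gamma_i^2$; for $(v,w)$ the outer-product term yields $u/(\gamma_i^2 v_i)$ while the $\nabla^2\gamma_i$ term vanishes; for $(v,v)$ the outer-product term $u^2/(\gamma_i^2 v_i^2)$ plus the $\nabla^2\gamma_i$ term $u/(\gamma_i v_i^2)$ combine into $u(\gamma_i+u)/(\gamma_i^2 v_i^2)$, to which the $-\ln v_i$ term contributes the extra $1/v_i^2$. These match $D_{w,w}$, $D_{v,w}$, and $D_{v,v}$ exactly.

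For positive definiteness and sparsity, the cleanest path is to invoke Theorem~\ref{theorem-barrier}(iii): since $f^*_{\mathrm{rel}}$ is a $3d$-LHSCB on $\mathrm{int}(\mathcal{K}^*_{\mathrm{rel}})$, its Hessian is automatically positive definite there, so no independent argument is required. Sparsity is immediate from the block structure, since the only dense couplings lie in the first row/column tied to $u$; the $d \gg 1$ count of nonzeros is $O(d)$, matching the augmented-sparse definition with $n_1 = n_2 = 0$. I do not anticipate a serious obstacle: the entire argument hinges on noticing that each $\gamma_i$ is decoupled across the index $i$ in the $v,w$ directions, after which the computation is mechanical. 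The one point requiring mild care is not forgetting the $-\ln v_i$ contribution in the $(v,v)$ block, which is responsible for the trailing $1/v_i^2$ term.
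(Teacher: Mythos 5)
Your proof is correct and follows essentially the same route as the paper's: a direct computation of the second partial derivatives, in which the Kronecker-delta structure of $\partial\gamma_i/\partial v_j$ and $\partial\gamma_i/\partial w_j$ forces the $(v,v)$, $(v,w)$, $(w,w)$ blocks to be diagonal, and your explicit entries match the paper's exactly. The only differences are organizational (you package the computation via the identity $\nabla^2(-\ln\gamma_i)=\gamma_i^{-2}\nabla\gamma_i\nabla\gamma_i^\top-\gamma_i^{-1}\nabla^2\gamma_i$ rather than listing all partials) and that you explicitly justify $H^*(z)\succ 0$ via the LHSCB property from Theorem~\ref{theorem-barrier}, a point the paper's proof leaves implicit; you also correctly read~\eqref{dual-barrier-rel} as $-\sum_i\ln(\gamma_i)-d\ln(u)-\sum_i\ln(v_i)$, which is consistent with the gradient formulas the paper itself uses.
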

\begin{proof}[Proof of Theorem~\ref{theorem-rel-entropy}]
	The proof is straightforward. The gradient of~\eqref{dual-barrier-rel} is
	\begin{align*}
		\begin{aligned}
			\nabla_{u} f^* &= \sum_{i \in [d]} \frac{\tau_i}{\gamma_i} - \frac{d}{u}, \\
			\nabla_{v_i} f^* &= -\frac{u}{\gamma_i v_i} - \frac{1}{v_i}, \quad \forall i \in [d],\\
			\nabla_{w_i} f^* &= -\frac{1}{\gamma_i}, \quad \forall i \in [d],
		\end{aligned}
	\end{align*}
	where Let $\tau_i = \ln\left(\frac{u}{v_i}\right), \forall i \in [d]$.
	The Hessian of~\eqref{dual-barrier-rel} is
	\begin{align*}
		\begin{aligned}
			\nabla_{u, u}^2 f^* &= \frac{d}{u^2} + \sum_{i \in [d]}\frac{1}{u\gamma_i} + \sum_{i \in [d]} \left(\frac{\tau_i}{\gamma_i}\right)^2, \\
			\nabla_{v_i,u}^2f^*  &= \nabla_{u, v_i}^2 f^* = -\frac{1}{\gamma_i v_i} - \frac{u \tau_i}{\gamma_i^2v_i}, \quad \forall i \in [d],\\
			\nabla_{w_i,u}^2 f^* &= \nabla_{u, w_i}^2 f^* = -\frac{\tau_i}{\gamma_i^2}, \quad \forall i \in [d],\\
			\nabla_{v_i, v_j}^2 f^* &= \delta_{i, j}\left(\frac{u(\gamma_i+u)}{\gamma_i^2v_i^2} + \frac{1}{v_i^2}\right), \quad \forall i,j \in [d],\\
			\nabla_{w_i, w_j}^2 f^* &= \delta_{i, j}\frac{1}{\gamma_i^2}, \quad \forall i,j \in [d],\\
			\nabla_{v_i, w_j}^2 f^* &= \delta_{i, j}\frac{u}{\gamma_i^2v_i}, \quad \forall i,j \in [d].
		\end{aligned}
	\end{align*}
\end{proof}

\section{Functional Proximity Measures}\label{section-centrality-check}
After obtaining a search direction $(\Delta x^k, \Delta s^k, \Delta z^k)$, we need to compute the step size $\alpha$ such that the new iterate $(x^{k+1}, s^{k+1}, z^{k+1}) = (x^{k}, s^{k}, z^{k}) + \alpha (\Delta x^k, \Delta s^k, \Delta z^k)$ satisfies conic feasibility and stays close to the central path.  The latter condition is crucial for the convergence of an IPM. There are several ways of defining the proximity to the central path depending on the scaling strategy used in an IPM~\cite{Nesterov98,Dahl21,Hypatia}. 
%\subsection{The proximity measure $\Phi(s,z)$} 

In this paper we use the \textit{functional} proximity measure~\cite{Nesterov98,Serrano15},
\begin{align}
	\Phi(s,z) := \nu \ln(s^\top z) + f(s) + f^*(z) - \nu \ln(\nu) + \nu, \label{proximity-measure}
\end{align}
for the proximity measurement, which requires evaluation of \emph{both} the barrier $f$ and its conjugate $f^*$. The sequence $(s^k,z^k)$ generated by an IPM converges to a global optimum if $\Phi(s^k,z^k) \le 1, \forall k$. The property of $\Phi(s,z)$ is mentioned in Lemma 5.2.2 and the connection between them is discussed in Theorem 7.0.6 of~\cite{Serrano15}.

For symmetric cones, the conjugate barrier function is the same as the original one, i.e. $f = f^*$. Hence, we have analytical solutions for both $f(s^k)$ and $f^*(z^k)$ at each interior point iteration. For nonsymmetric cones, we generally will not have analytical expressions for both $f(s)$ and $f^*(z)$, but will rather have one with an analytical solution and the other computed iteratively, which amounts to computation of the conjugate gradient. 

\subsection{Computation of conjugate gradient}
For an interior point algorithm in which the dual barrier value $f^*(z)$ is available from Theorem~\ref{theorem-barrier},the corresponding primal barrier value $f(s)$ can be computed via the dual barrier function via~\eqref{fundamental-conjugate-barrier-2}, i.e.\ using 
\begin{align*}
	f(s) = - \nu - f^*(-g(s)).
\end{align*}
Hence, the main effort is to compute the primal conjugate gradient via~\eqref{fundamental-conjugate-gradient}, i.e.\ using 
\begin{align}
	g(s) = \arg \sup_{w \in \mathcal{K}^*} \{-\langle s,w \rangle - f^*(w) \}. \label{eq-conjugate-gradient}
\end{align}
We next detail how to compute the conjugate gradient of dual barrier function efficiently for generalized power cones, power mean cones and relative entropy cones.
\subsubsection{Generalized power cone}
The barrier function of $\mathcal{K}_{\text {gpow }(\alpha,d_1,d_2)}^*$ is defined in~\eqref{dual-barrier-genpow} with $\nu=d_1+1$. We denote $\varphi(u)=\prod_{i \in [d_1]} \left(u_i/\alpha_i\right)^{2 \alpha_i}$, and $\zeta(u,w)=\varphi(u) - \|w\|^2$. Our goal is to find a conjugate gradient $g_{(p,r)}$ such that $-g^*(-g_{(p,r)}) = (p,r) \in \mathcal{K}_{\text{gpow}(\alpha,d_1,d_2)}$, which is summarized as follows:
\begin{theorem}\label{theorem-cent-genpow}
	Suppose $(p,r) \in \mathcal{K}_{\mathrm{gpow}(\alpha,d_1,d_2)}$. The corresponding primal conjugate gradient $g_{(p,r)} = (g_p,g_r)$ can be obtained via
	\begin{align*}
		g_{p_i} = -\frac{(1+\alpha_i)+\alpha_i g_{\|r\|}\|r\|}{p_i}, \ \forall i \in [d_1], \qquad g_r = \frac{g_{\|r\|}\cdot r}{\|r\|},
	\end{align*}
	where $g_{\|r\|}$ is the root of
	\begin{align}
		h(x) = \sum_{i \in [d_1]}2\alpha_i \ln \left(\|r\| \cdot x + \frac{1+\alpha_i}{\alpha_i}\right) - \ln \left(\frac{2 x}{\|r\|}+x^2\right) - \sum_{i \in [d_1]}2\alpha_i \ln(p_i). \label{genpow-compute-gr-scalar}
	\end{align}
	The root of $h(x)$ is computable via the $1$-dimensional Newton-Raphson method, which converges quadratically if we choose the initial point $x_0$ as
	\begin{align*}
		x_0 = -\frac{1}{\|r\|} + \frac{\psi \|r\| + \sqrt{\chi \left(\frac{\chi}{\|r\|^2} + \psi^2- 1\right)}}{\chi - \|r\|^2},
	\end{align*}
	where $\chi = \prod_{i \in [d_1]} p_i^{2\alpha_i},\psi = 1/(\sum_{i \in [d_1]}\alpha_i^2)$.
\end{theorem}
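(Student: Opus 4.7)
The plan is to characterize $g_{(p,r)}$ through the Fenchel--Young optimality condition, then analyze the resulting scalar equation to justify Newton's method from the stated $x_0$. Writing $\xi := -g_{(p,r)}$, identity~\eqref{fundamental-bilinear-map} gives $\nabla f_{\text{gpow}}^*(\xi) = -(p,r)$. Using the gradient formulas in the proof of Theorem~\ref{theorem-genpow-KKT}, the $w$-block of this equation forces $\xi_w$ to be anti-parallel to $r$ with $\|\xi_w\| = \zeta(\xi)\|r\|/2$, which motivates the scalar unknown $g_{\|r\|} := \zeta(\xi)\|r\|/2$, so that $g_r = g_{\|r\|}\, r/\|r\|$. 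Substituting $\varphi/\zeta = 1 + \|\xi_w\|^2/\zeta = 1 + g_{\|r\|}\|r\|/2$ into the $u$-block produces the stated formula for $g_{p_i}$, and the consistency relation $\varphi(\xi_u) = \zeta + \|\xi_w\|^2 = 2g_{\|r\|}/\|r\| + g_{\|r\|}^2$ reduces, after taking logarithms, to $h(g_{\|r\|}) = 0$.

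Next I would verify that $h$ has a unique positive root and is well-suited for Newton's method. Using $c_i = 1 + 1/\alpha_i > 2$ for $\alpha_i \in (0,1)$, every $1/(x+c_i/\|r\|)$ is dominated by $1/(x+2/\|r\|)$; summing with weights $2\alpha_i$ yields $\sum_i 2\alpha_i/(x+c_i/\|r\|) < 2/(x+2/\|r\|)$. This domination produces $h'(x) < 1/(x+2/\|r\|) - 1/x < 0$ and $h''(x) > 1/x^2 - 1/(x+2/\|r\|)^2 > 0$, so $h$ is strictly decreasing and strictly convex on $(0,\infty)$. Combined with $h(0^+) = +\infty$ and $h(+\infty) = \ln(\|r\|^2/\chi) < 0$ for $(p,r) \in \mathrm{int}\,\mathcal{K}_{\mathrm{gpow}(\alpha,d_1,d_2)}$, a unique root $x^* \in (0,\infty)$ exists.

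The choice of $x_0$ emerges by replacing the geometric mean $\prod_i(\|r\|x+c_i)^{2\alpha_i}$ in $h(x)=0$ with the affine square $(\|r\|x+1+\psi)^2$, converting the equation into a quadratic in $x+1/\|r\|$ whose positive root is precisely the stated $x_0$. To show $x_0 \le x^*$, the key inequality I would need is
\begin{align*}
\prod_i(\|r\|x+c_i)^{\alpha_i} \;\ge\; \|r\|x+1+\psi \qquad \forall x \ge 0.
\end{align*}
Setting $y := \|r\|x + 1$ and $\phi(y) := \prod_i(y+1/\alpha_i)^{\alpha_i} - y - \psi$, I would establish $\phi \ge 0$ on $[0,\infty)$ from three ingredients: (i) $\phi$ is concave, because $L(y) := \prod_i(y+1/\alpha_i)^{\alpha_i}$ satisfies $L''/L = (\sum_i \alpha_i/(y+1/\alpha_i))^2 - \sum_i \alpha_i/(y+1/\alpha_i)^2 \le 0$ by Cauchy--Schwarz with weights $\alpha_i$; (ii) $\phi(0) \ge 0$, equivalent (after inversion) to the weighted AM--GM inequality $\prod_i \alpha_i^{\alpha_i} \le \sum_i \alpha_i^2$; and (iii) $\lim_{y\to\infty}\phi(y) = d_1 - \psi \ge 0$, using $\sum_i \alpha_i^2 \ge 1/d_1$ from Cauchy--Schwarz applied to $\sum_i \alpha_i = 1$. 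A concave function on $[0,\infty)$ whose values at $0$ and at infinity are both nonnegative is nonnegative throughout, giving $\phi \ge 0$ and hence $h(x_0) \ge 0$, i.e., $x_0 \le x^*$.

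Finally, with $x_0 \in (0, x^*]$ and $h$ strictly convex, strictly decreasing, and $C^2$ with $h'(x^*) \ne 0$, Newton's method produces a monotonically non-decreasing sequence in $[x_0, x^*]$ converging to $x^*$, satisfying the classical quadratic bound $|x_{k+1}-x^*| \le \tfrac{1}{2}\sup_{[x_0,x^*]}(h''/|h'|)\cdot|x_k - x^*|^2$. I anticipate the main obstacle will be the sharp inequality in step (iii) of the previous paragraph: the precise choice $\psi = 1/\sum_i \alpha_i^2$ (rather than the coarser $d_1$ that one would obtain from direct weighted AM--GM) is calibrated specifically so that the scalar quadratic approximation both admits a closed-form positive root and remains a valid lower bound for $x^*$, thereby enabling monotone quadratic convergence from the very first iterate.
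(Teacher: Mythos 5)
Your proposal is correct and reaches the paper's equation $h(g_{\|r\|})=0$ and the same closed-form $x_0$ by the same overall route (reduce the $w$-block to the scalar $g_{\|r\|}=\zeta\|r\|/2$, substitute into the $u$-block, then minorize $h$ by a function with an explicit positive root), but two of your sub-arguments differ genuinely from the paper's. First, for monotonicity and convexity of $h$ you use the partial-fraction identity $\tfrac{d}{dx}\ln\bigl(\tfrac{2x}{\|r\|}+x^2\bigr)=\tfrac{1}{x}+\tfrac{1}{x+2/\|r\|}$ together with termwise domination from $\tfrac{1+\alpha_i}{\alpha_i}>2$; this yields $h''(x)>\tfrac{1}{x^2}-\tfrac{1}{(x+2/\|r\|)^2}>0$ in one line, which is noticeably cleaner than the paper's argument via the auxiliary concave function $g(\beta)=-\beta^3/(\beta x+\tfrac{1+\beta}{\|r\|})^2$ and the bound $g(t\beta)\ge t\,g(\beta)$. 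Second, for the key minorant $\prod_i(y+1/\alpha_i)^{\alpha_i}\ge y+\psi$ (with $y=\|r\|x+1$), the paper gets this in one step from Jensen's inequality applied to the convex map $t\mapsto\ln(\|r\|x+1/t+1)$ with weights $\alpha_i$, whereas you prove it by concavity of $y\mapsto\prod_i(y+1/\alpha_i)^{\alpha_i}$ (Cauchy--Schwarz) plus endpoint checks (weighted AM--GM at $y=0$, the asymptote $d_1-\psi\ge 0$ at infinity); your route is longer but makes explicit why $\psi=1/\sum_i\alpha_i^2$ is exactly the right constant, and your explicit limits $h(0^+)=+\infty$, $h(+\infty)=\ln(\|r\|^2/\chi)$ give a cleaner existence/uniqueness argument than the paper's. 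Two small caveats you should add: the argument implicitly requires $r\neq 0$ (the paper treats $r=0$ as a separate degenerate case, giving $g_{p_i}=-(1+\alpha_i)/p_i$, $g_r=0$) and requires $(p,r)$ in the interior of the cone so that $\chi>\|r\|^2$, which is needed both for $h(+\infty)<0$ and for the denominator $\chi-\|r\|^2$ in $x_0$ to be positive.
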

\begin{proof}[Proof of Theorem~\ref{theorem-cent-genpow}]
	Following the preceding discussion, the dual gradient is 
	\begin{subequations}
		\begin{align}
			g^*_{u_i} &= \frac{-2\alpha_i \varphi(u)}{u_i \zeta(u,w)} - \frac{1-\alpha_i}{u_i}, \label{genpow-conjugate-grad-1}\\
			g^*_{w_i} &= \frac{2w_i}{\zeta(u,w)}.\label{genpow-conjugate-grad-2}
		\end{align}
	\end{subequations}
	If $r = 0$, we have $-\frac{2(-g_{r_i})}{\zeta(p,r)} = r_i = 0, \forall i \in [d_2]$ and then 
	\begin{align*}
		\begin{aligned}
			g_{p_i} &= -\frac{1+\alpha_i}{p_i}, \quad  \forall i \in [d_1], \\
			g_{r_i} & =0, \quad \forall i \in [d_2],
		\end{aligned}
	\end{align*}
	from $\varphi(-g_p) = \zeta(-g_p,0)$.
	
	Suppose instead $r \neq 0$.  We will denote $\zeta = \zeta(-g_p,-g_r)$ for simplicity of notation and will assume $r \ge 0$ without loss of generality. Suppose $Q$ is an orthonormal transformation that maps $r \in \mathbb{R}^{d_2}$ to a vector of zeros except one at the first entry that is equal to $\|r\|$, i.e. $[\|r\|, 0, \dots]^\top$.  Then the conjugate gradient under the transformation $Q$ satisfies $g_r = Q^\top g_{Qr}$ due to~\eqref{eq-conjugate-gradient}, where the first entry of $g_{Qr}$ is $g_{\|r\|}$ and all other entries are zeros. From~\eqref{genpow-conjugate-grad-2} and~\eqref{fundamental-bilinear-map}, have
	\begin{align}
		g_r = \frac{\zeta r}{2} \label{genpow-primal-grad-1}.
	\end{align}
	Due to the invariance of $\zeta$ under orthonormal transformations, we then obtain
	\begin{align*}
		\zeta := \zeta(-g_p,-g_r) = \zeta(-g_p,-Q^\top g_{Qr}) = \zeta(-g_p,[-g_{\|r\|}; 0; \dots]) \overset{\eqref{genpow-primal-grad-1}}{=} \frac{2g_{\|r\|}}{\|r\|},
	\end{align*}
	which can be substituted back into~\eqref{genpow-primal-grad-1} to produce
	\begin{align*}
		g_r = \frac{g_{\|r\|} \cdot r}{\|r\|}.
	\end{align*}
	From~\eqref{genpow-conjugate-grad-1} and~\eqref{fundamental-bilinear-map} we can write the elements of the subvector $p$ as
	\begin{align}
		p_i = - \left(\frac{-2\alpha_i \varphi(-g_p)}{-g_{p_i} \zeta} - \frac{1-\alpha_i}{-g_{p_i}}\right), \ \forall i \in [d_1]. \label{genpow-pi}
	\end{align}
	which can be rearranged to
	\begin{align}\label{genpow-pi_2}
		p_i(-g_{p_i})\zeta = 2 \alpha_i \varphi(-g_p) + (1-\alpha_i)\zeta.
	\end{align}
	Applying $\varphi(\cdot)$ over both sides of \eqref{genpow-pi_2} for all $i$, we obtain
	\begin{align*}
		\prod_{i \in [d_1]}\alpha_i^{2\alpha_i} \cdot \varphi(p) \varphi(-g_p)\zeta^2 = \varphi(2\alpha_i \varphi(-g_p) + (1-\alpha_i)\zeta).
	\end{align*}
	Substituting
	\begin{align}
		\varphi(-g_p)= \zeta + \|g_r\|^2 = \zeta + \|Q^\top g_{Qr}\|^2 = \zeta + g_{\|r\|}^2 = \frac{2 g_{\|r\|}}{\|r\|}+g_{\|r\|}^2 \label{genpow-varphi}
	\end{align}
     yields
	\begin{align*}
		\implies & \prod_{i \in [d_1]}\alpha_i^{2\alpha_i} \cdot \varphi(p) \left(\frac{2 g_{\|r\|}}{\|r\|}+g_{\|r\|}^2\right)\left(\frac{2 g_{\|r\|}}{\|r\|}\right)^2 = \varphi \left(2\alpha_i \left(\frac{2 g_{\|r\|}}{\|r\|}+g_{\|r\|}^2\right) + (1-\alpha_i)\frac{2 g_{\|r\|}}{\|r\|}\right)\\
		\implies & \prod_{i \in [d_1]} p_i^{2\alpha_i} \left(\frac{2 g_{\|r\|}}{\|r\|}+g_{\|r\|}^2\right) \cdot \frac{4 g^2_{\|r\|}}{\|r\|^2} = \prod_{i \in [d_1]}\left(2g_{\|r\|}^2 + \frac{(1+\alpha_i)}{\alpha_i}\frac{2 g_{\|r\|}}{\|r\|}\right)^{2\alpha_i}\\
		\implies & \prod_{i \in [d_1]} p_i^{2\alpha_i} \left(\frac{2 g_{\|r\|}}{\|r\|}+g_{\|r\|}^2\right) = \prod_{i \in [d_1]}\left(g_{\|r\|}\cdot \|r\| + \frac{1+\alpha_i}{\alpha_i}\right)^{2\alpha_i},
	\end{align*}
	where computing $g_{\|r\|}$ reduces to finding the root of the function $h(x)$ defined in~\eqref{genpow-compute-gr-scalar}. 

	We next show that $h(x)$ is convex and monotonically decreasing. The gradient and Hessian of $h$ are
	\begin{align*}
		h'(x) & = \sum_{i \in [d_1]}\frac{2 \alpha_i \|r\|}{\|r\| \cdot x + \frac{1+\alpha_i}{\alpha_i}} - \frac{2(\|r\| + \frac{1}{x})}{\|r\|\cdot x + 2} \\
		& < \sum_{i \in [d_1]}\frac{2 \alpha_i \|r\|}{\|r\| \cdot x + 2} - \frac{2(\|r\| + \frac{1}{x})}{\|r\|\cdot x + 2} = -\frac{\frac{2}{x}}{\|r\| \cdot x + 2} < 0 \\
		h''(x) & = -\sum_{i \in [d_1]}\frac{2 \alpha_i \|r\|^2}{(\|r\| \cdot x + \frac{1+\alpha_i}{\alpha_i})^2} - \frac{-\frac{2}{x^2}(\|r\|\cdot x + 2)-2(\|r\| + \frac{1}{x})\|r\|}{(\|r\|\cdot x + 2)^2} \\
		& = -\sum_{i \in [d_1]}\frac{2 \alpha_i^3}{(\alpha_i x + \frac{1+\alpha_i}{\|r\|})^2} + \frac{\frac{4\|r\|}{x} + \frac{4}{x^2} + 2\|r\|^2}{(\|r\|\cdot x + 2)^2}.
	\end{align*}
	Since $h'(x) < 0$ holds for $x > 0$ and $h(\hat{x}) > 0$, the root of $h$ is unique over $x>0$. 
	
	Consider next the function $g(\beta) = -\frac{\beta^3}{\left(\beta x + \frac{1+\beta}{\|r\|}\right)^2}$, which is concave over the interval $[0,1]$ when $x > 0$. 
	We also have $g(0) = 0$, which implies
	\begin{align*}
		g(t\beta) \ge tg(\beta)+ (1-t)g(0) = tg(\beta), \ \forall t,\beta \in [0,1].
	\end{align*}
	Therefore,
	\begin{align*}
		h''(x) & = 2\sum_{i \in [d_1]}g(\alpha_i) + \frac{\frac{4\|r\|}{x} + \frac{4}{x^2} + 2\|r\|^2}{(\|r\|\cdot x +2)^2} \ge 2\sum_{i \in [d_1]}\alpha_i g(1) + \frac{\frac{4\|r\|}{x} + \frac{4}{x^2} + 2\|r\|^2}{(\|r\|\cdot x +2)^2} \\
		& = -\frac{2}{(x+\frac{2}{\|r\|})^2} + \frac{\frac{4\|r\|}{x} + \frac{4}{x^2} + 2\|r\|^2}{(\|r\|\cdot x +2)^2} = \frac{\frac{4\|r\|}{x} + \frac{4}{x^2}}{(\|r\|\cdot x +2)^2} > 0
	\end{align*}
	shows $h$ is convex over $x>0$. Since $h$ is convex and decreasing, the root of $h(x)$ is unique and a root finding Newton-Raphson method converges quadratically if it starts with $0< x_0 \le \hat{x}$ where $h(\hat{x}) > 0$~\cite[Thm.\ 1.9]{Suli2003}. Since $q(t) = \ln \left(\|r\| \cdot x + \frac{1}{t} +1 \right)$ is convex when $t \in [0,1], x \ge 0$, we can apply the Jensen's inequality and obtain
	\begin{align*}
		h(x) &\ge 2 \ln(\|r\|\cdot x+\frac{1}{\sum_{i \in [d_1]}\alpha_i^2}+1) - \ln \left(\frac{2 x}{\|r\|}+x^2\right) - \sum_{i \in [d_1]}2\alpha_i \ln(p_i), 
	\end{align*}
	where a positive root of the right part above is 
	\begin{align*}
		\hat{x} = -\frac{1}{\|r\|} + \frac{\psi \|r\| + \sqrt{\chi \left(\frac{\chi}{\|r\|^2} + \psi^2- 1\right)}}{\chi - \|r\|^2}
	\end{align*}
	with $\chi = \prod_{i \in [d_1]} p_i^{2\alpha_i}, \psi = 1/(\sum_{i \in [d_1]}\alpha_i^2)$, which implies $h(\hat{x}) \ge 0$ and we can set it as the starting point of the Newton-Raphson method.
%	\PGnote{I don't really follow the argument below.   It's not clear what steps you want to prove in order to prove that $q$ is convex.    You probably want to start with "We next show that $q(t) = \ln \left(\|r\| \cdot x + \frac{1}{t} +1 \right)$ is convex."  Then write the paragraph after as if it is a mini-proof of this statement.   }
%	\YCnote{2}{I changed the order of the proof where I prove $h(x)$ is convex and monotonically decreasing first and then find the starting point $\hat{x}$ such that $h(\hat{x})>0$.}
	After obtaining $g_{\|r\|}$ from~\eqref{genpow-compute-gr-scalar}, we get $\zeta = \frac{2g_{\|r\|}}{\|r\|}$ and the primal conjugate gradient
	\begin{align*}
			g_r &= \frac{g_{\|r\|}\cdot r}{\|r\|}, \quad g_{p_i} = -\frac{1}{p_i \zeta}\left((1-\alpha_i)\zeta + 2\alpha_i(\zeta + g_{\|r\|}^2)\right) = -\frac{1}{p_i}(1+\alpha_i+\alpha_i g_{\|r\|}\|r\|),
	\end{align*}
	via~\eqref{genpow-pi} and~\eqref{genpow-varphi}.
\end{proof}

\subsubsection{Power mean cone}
For the dual barrier function $f_{\mathrm{powm}}^*(u,w)$ defined in~\eqref{dual-barrier-powm}, we denote $\varphi(u)=\prod_{i \in [d]} \left(\frac{u_i}{\alpha_i}\right)^{\alpha_i}$, $\zeta(u,w)=\varphi(u)+ w$ and $\tau_i=\frac{\alpha_i}{u_i \zeta}, \forall i \in [d]$. The computation of the primal conjugate gradient $g_{(p,r)}$ is summarized in the theorem below.
\begin{theorem}\label{theorem-cent-powm}
	Suppose $(p,r) \in \mathcal{K}_{\mathrm{powm}(\alpha,d)}$. The corresponding primal conjugate gradient $g_{(p,r)}=(g_p,g_r)$ can be obtained via
	\begin{align*}
		g_{p_i}=-\frac{1}{p_i}-\frac{\alpha_i}{p_i}\left(1+ r g_r\right),
	\end{align*}
	where $g_r$ can be computed numerically depending on the sign of $r$:
	\begin{enumerate}
		\item $\mathbf{r > 0}$: We can obtain $g_{r} = 1/\hat{x}$, where $\hat{x}$ is the root of
		\begin{align*}
			h(x) = \sum_{i \in [d]} \alpha_i \ln \left(\frac{1+\alpha_i}{\alpha_i p_i}\cdot x+\frac{r}{p_i}\right) - \ln \left(1+ \frac{1}{r+x}\right).
		\end{align*}
		The root of $h(x)$ can be computed via the $1$-dimensional Newton-Raphson method, which converges quadratically if we start from $x_0 = 0$.
		\item $\mathbf{r < 0}$: We can obtain $g_r = \left(\varphi-\sqrt{\varphi^2 + 4/r^2}\right)/2 - 1/r$, where $\varphi = 1/\hat{x}$ and $\hat{x}$ is the root of
		\begin{align*}
			h(x) = \sum_{i \in [d]} \alpha_i \ln \left(\frac{x}{\alpha_i p_i}+\frac{r + \sqrt{r^2+4x^2}}{2p_i}\right). 
		\end{align*}
		We can compute the root of $h(x)$ via the $1$-dimensional Newton-Raphson method, which converges quadratically if we start from $x_0 =  \left(\frac{\alpha_i p_i}{1+\alpha_i}\right)^{\alpha_i}$.
		\item $\mathbf{r = 0}$: We have
		\begin{align*}
			g_{p_i} = -\frac{1}{p_i} - \frac{\alpha_i}{p_i}, \qquad g_r = \varphi(-g_p)/2.
		\end{align*}
	\end{enumerate}
\end{theorem}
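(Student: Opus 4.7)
The plan is to mirror the argument used for the generalized power cone (Theorem~\ref{theorem-cent-genpow}), reducing the vector-valued fixed-point problem $-g^*(-g_{(p,r)}) = (p,r)$ to a one-dimensional root finding equation in $g_r$ (or a convenient proxy variable). I would first record the dual gradient components derived in the proof of Theorem~\ref{theorem-power-mean}, namely $g^*_{u_i} = -\alpha_i\varphi(u)/(u_i\zeta(u,w)) - (1-\alpha_i)/u_i$ and $g^*_w = -1/\zeta(u,w) - 1/w$. Setting $u = -g_p$, $w = -g_r$ and equating to $(p,r)$, the scalar equation for $r$ yields $\zeta = g_r/(1 + r g_r)$, and consequently $\varphi(-g_p)/\zeta = 1 + g_r/\zeta = 2 + r g_r$. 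Substituting this into the equations for $p_i$ gives the closed-form expression $g_{p_i} = -1/p_i - (\alpha_i/p_i)(1 + r g_r)$ asserted in the theorem.

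Having eliminated $g_p$, the remaining constraint is the consistency relation $\varphi(-g_p) = \zeta + g_r$, which, after substituting $-g_{p_i} = (1 + \alpha_i(1+rg_r))/p_i$ and the closed form for $\zeta$, becomes a scalar equation purely in $g_r$. Here the sign of $r$ forces different algebraic simplifications, so I would split into three cases as the theorem does. For $r>0$, I would set $g_r = 1/x$, take logarithms on both sides, and reorganize the resulting identity into the stated $h(x)$; the case $r<0$ is treated by introducing $\varphi = 1/x$ as the primary unknown and inverting the quadratic relation between $\varphi$, $\zeta$ and $g_r$ to recover $g_r = (\varphi - \sqrt{\varphi^2 + 4/r^2})/2 - 1/r$. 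The case $r=0$ is the easiest: the $w$-equation collapses to $\zeta = -1/g_r$ only if $g_r \to \infty$, which together with $\varphi(-g_p) = \zeta + g_r$ forces $g_{p_i} = -(1+\alpha_i)/p_i$ and $g_r = \varphi(-g_p)/2$.

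The remaining analytical work is to show that each $h$ has a unique positive root and that Newton--Raphson, seeded at the indicated $x_0$, converges quadratically. Following the template of Theorem~\ref{theorem-cent-genpow}, I would compute $h'$ and $h''$ directly: for $r>0$, each summand $\alpha_i \ln((1+\alpha_i)x/(\alpha_i p_i) + r/p_i)$ is concave in $x$, while $-\ln(1 + 1/(r+x))$ is convex, so $h$ is convex; moreover a calculation analogous to the one in Theorem~\ref{theorem-cent-genpow} shows $h$ is strictly decreasing on $(0,\infty)$. The same monotonicity/convexity pattern holds in the $r<0$ case once the square root is handled by noting $r + \sqrt{r^2 + 4x^2}$ is positive and convex in $x$. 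Given convexity and monotonicity, \cite[Thm.~1.9]{Suli2003} guarantees quadratic convergence provided $0 < x_0 \le x^*$. For $r>0$ the choice $x_0 = 0$ works because $h(0) = \sum_i \alpha_i \ln(r/p_i) - \ln(1 + 1/r)$ and the cone inequality $\prod p_i^{\alpha_i} \ge r$ yields $h(0) \ge 0$; for $r<0$ the choice $x_0 = \prod_i (\alpha_i p_i/(1+\alpha_i))^{\alpha_i}$ can be verified non-negative via Jensen's inequality applied to a concave log-average, mirroring the derivation of the starting point in Theorem~\ref{theorem-cent-genpow}.

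The main obstacle I anticipate is the case $r<0$: the presence of $\sqrt{r^2 + 4x^2}$ makes the convexity/monotonicity analysis of $h$ and the verification of $h(x_0) \ge 0$ genuinely more delicate than in the other two cases, and care will be needed to show that the branch of $g_r$ recovered through the quadratic inversion is the correct one lying in $\mathrm{int}\,\mathcal{K}^*_{\mathrm{powm}}$.
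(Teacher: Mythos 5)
Your reduction of the fixed-point system to a scalar root-finding problem is exactly the paper's route: the same dual-gradient equations, the same elimination giving $g_{p_i}=-\bigl(1+\alpha_i(1+rg_r)\bigr)/p_i$ and $\zeta=g_r/(1+rg_r)$, the same master identity $\varphi(-g_p)=\zeta+g_r$, the same change of variables ($x=1/g_r$ for $r>0$, $x=1/\varphi$ for $r<0$), and the same quadratic inversion for the $r<0$ branch. Up to that point the proposal is sound (modulo the garbled remark in the $r=0$ case, where the $w$-equation gives $\zeta=g_r$ directly, not ``$\zeta=-1/g_r$ as $g_r\to\infty$'').

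The genuine problem is in the analytic verification, where every sign is reversed. You claim $h$ is convex and strictly decreasing with $h(x_0)\ge 0$; in fact, for both $r>0$ and $r<0$ the function $h$ here is \emph{increasing and concave} ($h'>0$, $h''<0$), and the starting points satisfy $h(x_0)<0$. Concretely, for $r>0$ the term $-\ln\bigl(1+1/(r+x)\bigr)=\ln(r+x)-\ln(r+x+1)$ has second derivative $1/(r+x+1)^2-1/(r+x)^2<0$, so it is concave rather than convex, and the whole of $h$ is concave; and the cone inequality $\prod_i p_i^{\alpha_i}\ge r$ gives $h(0)=\ln\bigl(r^2/((1+r)\prod_i p_i^{\alpha_i})\bigr)\le\ln\bigl(r/(1+r)\bigr)<0$, the opposite of what you assert. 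The situation you describe (convex, decreasing, $h(x_0)\ge 0$) is the one that occurs in Theorem~\ref{theorem-cent-genpow} for the generalized power cone; you have transplanted it here where it does not hold. Newton's method still converges quadratically from the stated $x_0$ because the concave-increasing configuration with $h(x_0)\le 0$ is the mirror-image case covered by the same one-sided convergence theorem, but as written your verification would fail: you would be unable to prove $h''\ge 0$, $h'\le 0$, or $h(0)\ge 0$, since all three are false. The fix is simply to establish the correct signs (as the paper does, with the $r<0$ concavity computation deferred to Appendix~\ref{appendix-power-mean}) and invoke the theorem for a concave increasing $h$ seeded below its root.
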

\begin{proof}[Proof of Theorem~\ref{theorem-cent-powm}]
	The dual gradient of~\eqref{dual-barrier-powm} is
	\begin{align*}
		\begin{aligned}
			&g_u^*=-\tau_i \varphi-\frac{1-\alpha_i}{u_i} \\
			& g_w^*=-\frac{1}{\zeta}-\frac{1}{w}.	
		\end{aligned}
	\end{align*}
	Our goal is to find $g_{(p, r)}$ such that $-g^*\left(-g_{(p, r)}\right)=\left(\begin{array}{l}p \\ r\end{array}\right) \in \mathcal{K}_{\text {powm}}$, which is
	\begin{align*}
		\begin{aligned}
			&p = -\frac{\alpha_i \varphi(-g_{(p,r)})}{g_{p_i}\zeta(-g_{(p,r)})} - \frac{1-\alpha_i}{g_{p_i}} = -\frac{1}{g_{p_i}}- \frac{\alpha_i g_r}{g_{p_i}\zeta(-g_{(p,r)})},\\
			&r=\frac{1}{\zeta(-g_{(p,r)})} - \frac{1}{g_r}
		\end{aligned}
 	\end{align*}
	with $g_r > 0$.  We then obtain the primal gradient $g_{(p,r)}$ as
	\begin{align}
		\begin{aligned}
			&g_{p_i}=-\frac{1}{p_i}-\frac{\alpha_i}{p_i}\left(1+ r g_r\right),\\ &\zeta(-g_{(p,r)})=\frac{1}{r+\frac{1}{g_r}}=\frac{g_r}{1+r g_r} > 0.
		\end{aligned}\label{power-mean-primal-grad}
	\end{align}
	Due to the definition of $\zeta(\cdot)$ and the two equalities above, we obtain
	\begin{align}
		&\frac{g_r}{1+ rg_r}=\prod_{i \in[d]}\left(\frac{1}{\alpha_i p_i}+\frac{1}{p_i}\left(1+ rg_r \right)\right)^{\alpha_i}- g_r. \label{root-eq-powm}
	\end{align}
We compute $g_p,g_r$ in three different ways based on the value of $r$:
	\begin{enumerate}
		\item When $r > 0$, ~\eqref{root-eq-powm} reduces to finding the root $\hat{x}$ of $h(x)$ if we regard $1/g_r$ as $x$,
		\begin{align*}
			\begin{aligned}
				&h(x):=\sum_{i \in [d]} \alpha_i \ln \left(\frac{1+\alpha_i}{\alpha_i p_i}\cdot x+\frac{r}{p_i}\right) - \ln \left(1+ \frac{1}{r+x}\right).
			\end{aligned}
		\end{align*}	
		The gradient and the hessian of $h(x)$ are
		\begin{align*}
			\begin{aligned}
				h^{\prime}(x) &= \sum_{i \in [d]} \frac{\alpha_i}{x + \frac{\alpha_i r}{1+\alpha_i}} + \frac{1}{(r+x)(r+x+1)} > 0,\\
				h^{\prime \prime}(x) &= -\sum_{i \in [d]} \frac{\alpha_i}{(x + \frac{\alpha_i r}{1+\alpha_i})^2} - \frac{1}{(r+x)^2} + \frac{1}{(r+x+1)^2} < 0,
			\end{aligned}
		\end{align*}
		when $x \ge 0$, which implies $h(x)$ is concave and increasing. Moreover, we have
		\begin{align*}
			h(0) = \sum_{i \in [d]} \alpha_i \ln \left(\frac{r}{p_i}\right) - \ln \left(1+ \frac{1}{r}\right) = \ln \left(\frac{r^2}{(1+r)\prod_{i=1}^d p_i^{\alpha_i}}\right) \le \left(\frac{r^2}{(1+r)r}\right) < 0.
		\end{align*}
		Hence, the root of $h(x)$ is unique and a root finding Newton-Raphson method converges quadratically when it starts with $x_0 = 0$ (Theorem 1.9 in~\cite{Suli2003}). Finally, we obtain the primal gradient $g_{(p,r)}$ via~\eqref{power-mean-primal-grad}, given $g_r = 1/\hat{x}$.
		\item When $r < 0$, we design a different strategy to compute the conjugate gradient. Due to the definition of $\varphi(\cdot)$ and~\eqref{power-mean-primal-grad}, we have 
		\begin{align*}
			\varphi(-g_p) = \frac{g_r}{1+rg_r}+ g_r,
		\end{align*}
		which (abbreviating $\varphi(-g_p)$ as $\varphi$) can be written as
		\begin{align}
			g_r = \frac{\varphi-\sqrt{\varphi^2 + \frac{4}{r^2}}}{2} - \frac{1}{r} \label{power-mean-gr}
		\end{align}
		given $g_r \ge -1/r$. Substituting~\eqref{power-mean-gr} into~\eqref{root-eq-powm} yields
		\begin{align*}
			\varphi = \prod_{i \in[d]}\left(\frac{1}{\alpha_i p_i}+\frac{r\varphi + \sqrt{r^2\varphi^2+4}}{2p_i}\right)^{\alpha_i},
		\end{align*}
	which is equivalent to finding the root of $h(x)$ if we regard $1/\varphi$ as $x$, where
	\begin{align}
		h(x) = \sum_{i \in [d]} \alpha_i \ln \left(\frac{x}{\alpha_i p_i}+\frac{r + \sqrt{r^2+4x^2}}{2p_i}\right). \label{h-def-power-mean}
	\end{align}
	We can obtain
	\begin{align*}
		&h'(x) = \sum_{i \in [d]}\alpha_i \cdot \frac{1+\frac{2\alpha_i x}{\sqrt{r^2+4x^2}}}{x+\frac{\alpha_i}{2}(r+\sqrt{r^2+4x^2})} > 0
	\end{align*}
	and $h''(x) < 0$ (See Appendix~\ref{appendix-power-mean}) when $x > 0$. Moreover,
	\begin{align*}
		h(x_0) & = \sum_{i \in [d]} \alpha_i \ln \left(\frac{x_0}{\alpha_i p_i}+\frac{2x_0^2}{p_i(\sqrt{r^2+4x_0^2}-r)}\right) < \sum_{i \in [d]} \alpha_i \ln \left(\frac{x_0}{\alpha_i p_i}+\frac{x_0}{p_i}\right) \\
		& \le \ln(x_0) + \sum_{i \in [d]} \alpha_i \ln \left(\frac{1+\alpha_i}{\alpha_i p_i}\right) = 0.
	\end{align*}
	Hence, the root $\hat{x}$ of $h(x)$ is unique and a root finding Newton-Raphson method converges quadratically when it starts with $x_0 = \left(\frac{\alpha_i p_i}{1+\alpha_i}\right)^{\alpha_i}$ with $h(x_0) < 0$. Given $\varphi = 1/\hat{x}$, we obtain the primal gradient $g_r$ via~\eqref{power-mean-gr} and $g_p$~\eqref{power-mean-primal-grad}.
	\item When $r = 0$,~\eqref{power-mean-primal-grad} becomes
	\begin{align*}
		g_{p_i} = -\frac{1}{p_i} - \frac{\alpha_i}{p_i}, \qquad \zeta(-g_{(p,r)}) = g_r.
	\end{align*}
	Then, $\varphi(-g_p) -g_r = \zeta(-g_{(p,r)}) = g_r$ due to the definition of $\varphi(\cdot)$, which yields $g_r = \varphi(-g_p)/2$.
	\end{enumerate}
	
\end{proof}

\subsubsection{Relative entropy cone}
For the barrier function $f_{\mathrm{rel}}^*(u,v,w)$ defined in~\eqref{dual-barrier-rel}, the primal conjugate gradient can be obtained from the following theorem.

\begin{theorem}\label{theorem-cent-rel-entropy}
	Suppose we have a point $(p,q,r) \in \mathcal{K}_{\text {rel}}$ where $p \in \mathbb{R}, q,r \in \mathbb{R}^d$. The corresponding primal conjugate gradient $g_{(p,q,r)}=(g_p,g_q,g_r)$ can be obtained via
	\begin{align*}
		g_{q_i} = \frac{g_p r_i - 1}{q_i}, \qquad g_{r_i} = g_p\ln\left(\frac{g_p}{g_{q_i}}\right) - g_p - \frac{1}{r_i}, \ \forall i \in [d],
	\end{align*}
	where $g_p = -1/\hat{x}$. $\hat{x}$ is the root of
	\begin{align*}
		h(x): = d \cdot x + \sum_{i \in [d]} r_i \ln \left(\frac{r_i}{q_i} + \frac{x}{q_i}\right) - p,
	\end{align*}
	which can be solved via the $1$-dimensional Newton-Raphson method, which converges quadratically if we choose the initial point $x_0=0$.
\end{theorem}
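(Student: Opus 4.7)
The plan is to use the self-inverse identity $-g^*(-g_{(p,q,r)})=(p,q,r)$ from~\eqref{fundamental-bilinear-map}. Writing the auxiliary dual point as $(u',v',w'):=(-g_p,-g_q,-g_r)$, membership $(u',v',w')\in\mathrm{int}\,\mathcal{K}^*_{\mathrm{rel}}$ forces $g_p<0$, $g_{q_i}<0$, and $\gamma'_i:=w'_i-u'\ln(u'/v'_i)+u'>0$. Using the dual-gradient formulas derived in the proof of Theorem~\ref{theorem-rel-entropy}, the $w_i$-component $r_i=-g^*_{w_i}(u',v',w')=1/\gamma'_i$ pins down $\gamma'_i=1/r_i$, and the $v_i$-component $q_i=-g^*_{v_i}(u',v',w')=(u'+\gamma'_i)/(\gamma'_i v'_i)$ inverts directly to give $g_{q_i}=(g_p r_i-1)/q_i$.

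Next, I would read off $g_{r_i}$ by rearranging $\gamma'_i=w'_i-u'\ln(u'/v'_i)+u'=1/r_i$ for $w'_i$, which yields $g_{r_i}=g_p\ln(g_p/g_{q_i})-g_p-1/r_i$ (here $g_p/g_{q_i}>0$ since both factors are negative). The only remaining unknown is the scalar $g_p$, which is determined by $p=-g^*_u(u',v',w')=-\sum_i\ln(u'/v'_i)/\gamma'_i+d/u'$. Substituting the expressions just obtained gives $p=-\sum_i r_i\ln(g_p/g_{q_i})-d/g_p$; the change of variables $y:=-1/g_p>0$ simplifies $g_p/g_{q_i}$ to $q_i/(r_i+y)$, and the equation collapses to exactly $h(y)=0$ with $h$ as stated, so $\hat{x}=y=-1/g_p$.

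It then remains to show that $h$ has a unique nonnegative root and that Newton's iteration from $x_0=0$ converges quadratically to it. Direct differentiation gives $h'(x)=d+\sum_i r_i/(r_i+x)>0$ and $h''(x)=-\sum_i r_i/(r_i+x)^2<0$ for $x\ge 0$, so $h$ is strictly increasing and concave; the cone hypothesis $p\ge\sum_i r_i\ln(r_i/q_i)$ gives $h(0)\le 0$, and $h(x)\to\infty$ as $x\to\infty$, so the root $\hat x\ge 0$ exists and is unique. By concavity, Newton steps from $x_0=0$ stay monotonically below $\hat x$ (each tangent overestimates $h$, so its zero underestimates $\hat x$), and quadratic convergence follows from the same result~\cite[Thm.~1.9]{Suli2003} cited in the analogous proofs of Theorems~\ref{theorem-cent-genpow}--\ref{theorem-cent-powm}. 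The main delicate point is purely bookkeeping: verifying throughout that $g_p<0$, $g_{q_i}<0$, and $g_p r_i-1<0$ so that every logarithm has a positive argument and the substitution $y=-1/g_p$ produces the $h$ exactly as written.
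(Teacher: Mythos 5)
Your proposal is correct and follows essentially the same route as the paper: invert the dual gradient map componentwise to pin down $\gamma_i = 1/r_i$ and hence $g_{q_i}$ and $g_{r_i}$, reduce the remaining scalar equation for $g_p$ to the root of $h$ via the substitution $x = -1/g_p$, and invoke concavity and monotonicity of $h$ together with $h(0)\le 0$ for quadratic convergence of Newton from $x_0 = 0$. You actually supply a few details the paper leaves implicit (the explicit formulas $h'(x) = d + \sum_i r_i/(r_i+x)$ and $h''(x) = -\sum_i r_i/(r_i+x)^2$, and the sign bookkeeping ensuring all logarithm arguments are positive), but the argument is the same.
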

\begin{proof}[Proof of Theorem~\ref{theorem-cent-rel-entropy}]
	We aim to find the conjugate gradient $g_{(p, q, r)}$ such that $-g^*\left(-g_{(p, q, r)}\right)=(p,q,r) \in \mathcal{K}_{\text {rel}}$, which becomes
	\begin{align*}
		\begin{aligned}
			p &= -\sum\nolimits_{i \in [d]} \frac{\ln\left(\frac{g_p}{g_{q_i}}\right)}{\gamma_i} - \frac{d}{g_p}, \\
			q_i &= \frac{g_p}{\gamma_i g_{q_i}} - \frac{1}{g_{q_i}}, \\
			r_i &= \frac{1}{\gamma_i},
		\end{aligned}
	\end{align*}
	where $\gamma_i$ is the abbreviation of $\gamma_i(-g_{(p,q,r)})$ defined in Section~\ref{subsection:relative-entropy-cone}. We can then obtain
	\begin{align}
		-g_{q_i} = \frac{-g_p r_i + 1}{q_i}, \qquad \gamma_i = \frac{1}{r_i}. \label{entropy-primal-gradient}
	\end{align}
	with
	\begin{align*}
		p &= -\frac{d}{g_p}-\sum_{i \in [d]} r_i \ln \left(\frac{g_p q_i}{g_p r_i - 1}\right)\\
		& = -\frac{d}{g_p} + \sum_{i \in [d]} r_i \ln \left(\frac{r_i}{q_i} - \frac{1}{g_p q_i}\right),
	\end{align*}
	which is a nonlinear equation w.r.t.\ $g_p$. Suppose we define $h(x)$ as
	\begin{align*}
		h(x): = d \cdot x + \sum_{i \in [d]} r_i \ln \left(\frac{r_i}{q_i} + \frac{x}{q_i}\right) - p.
	\end{align*}
	We can compute the root $\hat{x}$ of $h(x)$ and $g_p$ is indeed $-\hat{x}^{-1}$. Since $h(x)$ is concave and increasing, the root of $g(x)$ is unique and a root finding Newton-Raphson method converges quadratically when it starts with $0 \le x_0 \le \hat{x}$ where $g(x_0) < 0$ (Theorem 1.9 in~\cite{Suli2003}). A feasible choice is $x_0 = 0$ as $h(0) = \sum_{i \in [d]}r_i\ln \left(r_i/q_i\right) < p$ due to $(p,q,r) \in \mathcal{K}_{\text {rel}}$. Finally, we can obtain $g_{q_i}, g_{r_i}, \forall i \in [d]$ via~\eqref{entropy-primal-gradient} and
	\begin{align*}
		g_{r_i} = g_p\ln\left(\frac{g_p}{g_{q_i}}\right) - g_p - \frac{1}{r_i}
	\end{align*}
	due to the definition of $\gamma_i$.
\end{proof}

\section{A dual-scaling IPM for nonsymmetric conic optimization}\label{section-implementation}
%\PGnote{I don't think you ever explain how the "centrality check" section of the paper relates to the algorithm here.   Presumably $\mathcal{F}$ is just the composition of ones in the HSDE.   How is the centrality check to be used?}
%\YCnote{2}{I add this section describing our dual-scaling IPM with affine-combined steps.}
We summarize the implementation of our dual-scaling IPM algorithm in this section.
%\begin{algorithm}[H]
%	\caption{A dual-scaling IPM algorithm}
%	\label{alg_IPM}
%	\begin{algorithmic}[1]
%		\State Set the initial point:
%		\\
%		\While {$k \le k_{\max}$}
%		\State Update residuals, gap and check termination or infeasibility
%		\State Update the scaling matrix $H_s$:
%		\State Affine step:
%		\State Update the centering parameter:
%		\State Combined step: 
%		\State Update iterates $(x^+,y^+,s^+,z^+,\kappa^+,\tau^+)$
%		\State $k \gets k+1$
%		\EndWhile
%	\end{algorithmic}
%\end{algorithm}
\subsection{Initialization of $s^0,z^0$}
The initialization of symmetric cones and three-dimensional nonsymmetric cones, i.e. power and exponential cones, follows the Mosek setting as discussed in~\cite{Dahl21}, where $x^0 = 0, \kappa^0= \tau^0 = 1$ and $s^0,z^0$ are set to 
\begin{align}
	s^0 = z^0 = -g^*(z^0), \label{unit-initialization}
\end{align}
which are on the central path with $\mu^0 = 1$. This strategy holds for $\mathcal{K}_{\mathrm{gpow}(\alpha,d_1,d_2)}$, and we initialize $s^0,z^0$ by 
\[
s^0=z^0=\left(\left(\sqrt{1+\alpha_i}\right)_{i \in [d_1]}, 0_{d_2}\right).
\] 

For $\mathcal{K}_{\mathrm{powm}(\alpha,d)},\mathcal{K}_{\mathrm{rel}}$, we can not find $s^0,z^0$ satisfying~\eqref{unit-initialization}, so we initialize $s^0,z^0$ as in Hypatia~\cite{Hypatia}, where $s^0, z^0$ satisfy $s^0 = -g^*(z^0)$. Note that $\mu^0 = 1$ still holds in this setting.
\subsection{Algorithmic sketch}
Our dual-scaling IPM proceeds as follows at each iteration $k$:
\begin{enumerate}
	\item \textit{Update residuals, gap and check termination or infeasibility:} We compute residuals by
	$$
	\begin{array}{ll}
		r_x = -G^\top y^k-A^\top z^k-c \tau^k, & r_y = G x^k-h \tau^k, \\
		r_\tau = \kappa^k+c^\top x^k +h^\top y^k +b^\top z^k, & r_z = s^k+A x^k- b \tau^k,
	\end{array}
	$$
	and $\mu^k = \frac{{s^k}^\top z^k +\kappa^k \tau^k}{\nu + 1}$. The computation of duality gap, the termination check and the infeasibility detection follows ECOS~\cite{Domahidi13}.
	\item \textit{Affine step:} The affine direction (predictor) is the solution of
	\begin{align*}
		\begin{aligned}
			& \left[\begin{array}{cccc}
				0 & G^\top & A^\top & c \\
				-G & 0 & 0 & h \\
				-A & 0 & 0 & b \\
				-c^\top & -h^\top & -b^\top & 0
			\end{array}\right]\left[\begin{array}{l}
				\Delta x_a \\
				\Delta y_a \\
				\Delta z_a \\
				\Delta {\tau}_a
			\end{array}\right]  - \left[\begin{array}{c}
				0 \\
				0 \\
				\Delta s_a \\
				\Delta {\kappa}_a
			\end{array}\right] = \left[\begin{array}{l}
				r_x \\
				r_y \\
				r_z \\
				r_\tau
			\end{array}\right]\\
			& \qquad \mu^k H^*(z^k)\Delta z_a + \Delta s_a = -s^k, \quad \tau^k \Delta \kappa_a + \kappa^k \Delta \tau_a = - \tau^k \kappa^k,
		\end{aligned}
	\end{align*}
	which tries to remove residuals of the linearized model at iteration $k$. We can compute the maximal step size $\alpha_{a}$ such that $(s^k+\alpha_{a} \Delta s_a, z^k+\alpha_{a} \Delta z_a, \tau^k+\alpha_{a} \Delta \tau_a, \kappa^k +\alpha_{a} \Delta \kappa_a)$ resides in $\mathcal{F}$.
	\item \textit{Update the sparse decomposition of $H^*(z)$:} We update the augmented sparse decomposition of the dual Hessian $H^*(z)$ for nonsymmetric cones as discussed in Section~\ref{section:augmented-sparsity} and Section~\ref{section-sparse-decomposition}.
	\item \textit{Combined step:} The weight of centrality $\sigma$ is set to $(1-\alpha_a)^3$ empirically and then used for the computation of centering direction (corrector) via
	\begin{align*}
		\begin{aligned}
			& \left[\begin{array}{cccc}
				0 & G^\top & A^\top & c \\
				-G & 0 & 0 & h \\
				-A & 0 & 0 & b \\
				-c^\top & -h^\top & -b^\top & 0
			\end{array}\right]\left[\begin{array}{l}
				\Delta x_c \\
				\Delta y_c \\
				\Delta z_c \\
				\Delta {\tau_c}
			\end{array}\right]  - \left[\begin{array}{c}
				0 \\
				0 \\
				\Delta s_c \\
				\Delta {\kappa_c}
			\end{array}\right] = (1-\sigma)\left[\begin{array}{l}
				r_x \\
				r_y \\
				r_z \\
				r_\tau
			\end{array}\right]\\
			& \qquad \mu^k H^*(z^k)\Delta z_c + \Delta s_c = -s^k - \sigma \mu^k g^*(z^k) - \eta(\Delta s_a, \Delta z_a), \\
			& \qquad \quad \tau^k \Delta \kappa_c + \kappa^k \Delta \tau_c = - \tau^k \kappa^k + \sigma \mu^k - \Delta \tau_a \Delta \kappa_a,
		\end{aligned}
	\end{align*}
	where $\Delta \tau_a \Delta \kappa_a$ and the function $\eta(\Delta s_a, \Delta z_a)$ are higher-order corrections given information from the affine directions. We only use Mehrotra's correction~\cite{Mehrotra92} for symmetric cones. Likewise, we compute the largest step size $\alpha_c$ ensuring $(s^k+\alpha_{c} \Delta s_c, z^k+\alpha_{c} \Delta z_c, \tau^k+\alpha_{c} \Delta \tau_c, \kappa^k +\alpha_{c} \Delta \kappa_c)$ stays inside $\mathcal{F}$ and the proximity measure satisfies $\Phi(s^k+\alpha_{c} \Delta s_c, z^k+\alpha_{c} \Delta z_c) \le 1$ as discussed in Section~\ref{section-centrality-check}. 
	\item \textit{Update iterates:} At the end of each iteration $k$, we obtain the new iterate $(x^{k+1},y^{k+1},s^{k+1},z^{k+1},\kappa^{k+1},\tau^{k+1})$ by
	\begin{align*}
	\resizebox{.9 \textwidth}{!}
	{$(x^{k+1},y^{k+1},s^{k+1},z^{k+1},\kappa^{k+1},\tau^{k+1}) := (x^k,y^k,s^k,z^k,\kappa^k,\tau^k) + \alpha_c (\Delta x_c,\Delta y_c, \Delta s_c, \Delta z_c,\Delta \kappa_c,\Delta \tau_c).$}
	\end{align*}
\end{enumerate}

\section{Experiments}\label{section-experiments}
In our experiments, we choose two examples from Hypatia~\cite{Coey23} to show the effectiveness of our sparse implementation for generalized power cones, the discrete maximum likelihood and the maximum volume hypercube problems. We test these examples with 4 different solver configurations: 
\begin{enumerate}
	\item \texttt{Clarabel-GenPow} is the proposed sparse implementation of generalized power cones in Clarabel; 
	\item \texttt{Clarabel-Pow} uses Clarabel but transforms the generalized power cone to a product of three-dimensional power cones as formulated in~\cite{Chares09,mosek};
	\item \texttt{Hypatia} uses the Hypatia solver, which supports the generalized power cone constraints directly;
	\item \texttt{Mosek} denotes the use of Mosek with transforming the generalized power cone to a product of three-dimensional power cones.
\end{enumerate}
The convergence tolerance is set to $\epsilon=10^{-8}$ for all solvers in all tests. We utilize the dual scaling strategy (same as ECOS) for Clarabel in our test for fair comparison between the direct support of generalized power cones (configuration (1)) and the transformation to a product of three-dimensional power cones (configuration (2)). 

We also test the sparse implementation of the power mean cone, called \texttt{Clarabel-PowM}, on these same two examples, since it produces the same results as the generalized power cone for both cases.

In addition, we choose the entropy maximization problem from Mosek's Cookbook~\cite{mosek} for testing relative entropy cones. We test it with 4 different solver settings: \begin{enumerate}
	\item \texttt{Clarabel-Entropy} is the proposed sparse implementation of relative entropy cones via Clarabel; 
	\item \texttt{Clarabel-Exp} uses Clarabel but transforms the relative entropy cone to a product of three-dimensional exponential cones; 
	\item \texttt{ECOS} uses the ECOS solver, which supports three-dimensional exponential cones; 
	\item \texttt{Mosek} denotes the use of Mosek solver with three-dimensional exponential cones.
\end{enumerate} The termination accuracy is set to be $\epsilon=10^{-8}$ and both setting (1) and (2) use the dual scaling strategy for this test.
\subsection{Discrete maximum likelihood}
\subsubsection{Test of generalized power cones}
The discrete maximum likelihood problem has the form
\begin{align}
\begin{aligned}
	& \max \quad t \\
	\mathrm{s.t.} \quad & \sum_{i \in [n]} x_i = 1, \\
	&  (x,t) \in \mathcal{K}_{\text {gpow }(\alpha,n,1)},
\end{aligned}\label{example-1}
\end{align}
and we test \texttt{Clarabel-GenPow} and \texttt{Hypatia} on it directly. Following~\cite{mosek}, we can rewrite~\eqref{example-1} by replacing the generalized power cone constraint with the products of three-dimensional power cones where $p_i = \alpha_i/(\sum_{ 1\le j \le i}\alpha_j)$. This is the form used by \texttt{Clarabel-Pow} and \texttt{Mosek},
\begin{align}
	\begin{aligned}
		& \max \quad t \\
		\mathrm{s.t.} \quad & \sum_{i \in [n]} x_i = 1, x \ge 0,\\
		& x_1^{1 -p_2} x_2^{p_2} \geq\left\vert z_3\right\vert, \\
		& z_3^{1-p_3} x_3^{p_3} \geq\left\vert z_4\right\vert, \\
		& \cdots \\
		& z_{n-1}^{1-p_{n-1}} x_{n-1}^{p_{n-1}} \geq\left\vert z_n\right\vert, \\
		& z_n^{1-p_n} x_n^{p_n} \geq \vert t\vert.
	\end{aligned}\label{example-1-extended}
\end{align}

 Computational results are given in Table~\ref{table-1-1} and Table~\ref{table-1-2} for results of  dimension $n$ from $100$ to $2500$, which record the total computational time and the number of  iterations for different dimensions $n$ of the generalized power cone respectively.
\ifpreprint
\begin{table}[H]
	\begin{center}
		\begin{tabular}{ |c||c|c|c|c|  }
			%	\hline
			%	\multicolumn{4}{|c|}{Country List} \\
			\hline
			n & \texttt{Clarabel-GenPow} & \texttt{Clarabel-Pow} & \texttt{Mosek} & \texttt{Hypatia}\\
			\hline
			100   & 3.96 & 7.67 & 16 & 33 \\
			\hline
			500   & 6.42 & 23.9 & 31 & 334 \\
			\hline
			2500  & 23.7 & 142 & 125 & 22190 \\
			\hline
		\end{tabular}
	\end{center}
	\caption{Time (ms) w.r.t. dimensions (generalized power cones)\label{table-1-1}}
\end{table}
\begin{table}[H]
	\begin{center}
		\begin{tabular}{ |c||c|c|c|c|  }
			%	\hline
			%	\multicolumn{4}{|c|}{Country List} \\
			\hline
			n & \texttt{Clarabel-GenPow} & \texttt{Clarabel-Pow} & \texttt{Mosek} & \texttt{Hypatia}\\
			\hline
			100   & 14 & 19 & 11 & 22 \\
			\hline
			500   & 13 & 20 & 14 & 37 \\
			\hline
			2500  & 16 & 24 & 16 & 67 \\
			\hline
		\end{tabular}
	\end{center}
	\caption{Iteration number w.r.t. dimensions (generalized power cones)\label{table-1-2}}
\end{table}
\else
\begin{table}[h]
	\begin{center}
		\begin{minipage}{\textwidth}
			\caption{Total computational time (ms) w.r.t. dimensional change\label{table-1-1}}
			\begin{tabular*}{\textwidth}{@{\extracolsep{\fill}}lcccc@{\extracolsep{\fill}}}
				\toprule%
				& \multicolumn{4}{@{}c@{}}{Computational time (ms)} \\ \cmidrule{2-5}%
				n & Clarabel-GenPow & Clarabel-Pow & Mosek & Hypatia\\
				\midrule
				100   & 3.96 & 7.67 & 16 & 33 \\
				500   & 6.42 & 23.9 & 31 & 334 \\
				2500  & 23.7 & 142 & 125 & 22190 \\
				\botrule
			\end{tabular*}
		\end{minipage}
	\end{center}
\end{table}
\begin{table}[h]
	\begin{center}
		\begin{minipage}{\textwidth}
			\caption{Total Iteration number w.r.t. dimensional change\label{table-1-2}}
			\begin{tabular*}{\textwidth}{@{\extracolsep{\fill}}lcccc@{\extracolsep{\fill}}}
				\toprule%
				& \multicolumn{4}{@{}c@{}}{Iteration number} \\ \cmidrule{2-5}%
				n & Clarabel-GenPow & Clarabel-Pow & Mosek & Hypatia\\
				\midrule
				100   & 14 & 19 & 11 & 22 \\
				500   & 13 & 20 & 14 & 37 \\
				2500  & 16 & 24 & 16 & 67 \\
				\botrule
			\end{tabular*}
		\end{minipage}
	\end{center}
\end{table}
\fi
Table~\ref{table-1-1} shows that our sparse LDL implementation for generalized power cones is very  efficient relative to the other methods. It can solve the problem within a few tens of milliseconds even for dimensions up to $n=2500$. In comparison, \texttt{Clarabel-Pow} and \texttt{Mosek} take more than $100$ ms to solve it, which is about 4-5 times slower than the proposed sparse implementation in Section~\ref{section-sparse-decomposition}. This can be explained by the transformation of a generalized power cone to a product of three-dimensional cones. Firstly, the transformation increases the total number of variables in~\eqref{example-1-extended}, which results in more non-zero entries in LDL factorization at each iteration. Secondly, it takes less time to compute the conjugate gradient for a generalized power cone than the computation for an equivalent collection of three-dimensional cones, and hence less time for centrality check of a generalized power cone in~\eqref{example-1} than that in~\eqref{example-1-extended}. Thirdly, Table~\ref{table-1-2} says \texttt{Clarabel-GenPow} takes fewer iteration numbers than \texttt{Clarabel-Pow} regardless of the choice of dimension $n$, which is consistent with the link between the self-concordant parameter $\nu$ and the complexity of the total iteration number in theory, i.e. $\mathcal{O}(\sqrt{\nu}\ln(1/\epsilon))$. 

\subsubsection{Test of power mean cones}
For the same problem as in the previous section we can replace the generalized power cone in~\eqref{example-1} with a power mean cone, 
\begin{align*}
	\begin{aligned}
		& \max \quad t \\
		\mathrm{s.t.} \quad & \sum_{i \in [n]} x_i = 1, \\
		&  (x,t) \in \mathcal{K}_{\mathrm{powm}(\alpha,n)},
	\end{aligned}
\end{align*}
which should yield the same solution as~\eqref{example-1} since the optimum of~\eqref{example-1} must be $t \ge 0$. Since \texttt{Mosek} doesn't support power mean cones directly we compare our spare implementation only with Hypatia. We vary the dimension $n$ as for the test of generalized power cones. Table~\ref{table-1-3} shows that our sparse implementation for power mean cones is significantly faster than Hypatia which is based on the Cholesky decomposition on the reduced normal system, both in total time and the averaged time per iteration. Although the results are slower than our sparse implementation of generalized power cones, it still performs better than other solvers on the discrete maximum likelihood problem. This demonstrates the efficacy of our sparse augmentation approach. Compared to \texttt{Hypatia}, \texttt{Clarabel-PowM} also scales well when the dimension $n$ becomes large.

\begin{table}[!htb]
	\begin{subtable}{.5\linewidth}
		\centering
		\begin{tabular}{ |c||c|c|  }
			\hline
			n & \texttt{Clarabel-PowM} & \texttt{Hypatia}\\
			\hline
			100   & 4.4 & 23 \\
			\hline
			500   & 10.8 & 320\\
			\hline
			2500  & 49.1 & 20926 \\
			\hline
		\end{tabular}
		\caption{Time (ms) w.r.t. dimensions}
	\end{subtable}%
	\begin{subtable}{.5\linewidth}
		\centering
		\begin{tabular}{ |c||c|c|  }
			\hline
			n & \texttt{Clarabel-PowM} & \texttt{Hypatia}\\
			\hline
			100   & 13 & 17 \\
			\hline
			500   & 14 & 33\\
			\hline
			2500  & 16 & 66 \\
			\hline
		\end{tabular}
		\caption{Iteration number w.r.t. dimensions}
	\end{subtable} 
	\bigskip
	\caption{The maximum likelihood problem via power mean cones} \label{table-1-3}
\end{table}

\subsection{Maximum volume hypercube}
\subsubsection{Test of generalized power cones}
The problem of finding the maximum volume is given as follows,
\begin{align}
\begin{aligned}
	& \max \quad t \\
	\text{s.t.} \quad & (x,t) \in \mathcal{K}_{\text {gpow }(\alpha,n,1)}, \alpha = \left[\frac{1}{n}, \dots, \frac{1}{n}\right], \\
	& Ax \in \mathcal{B}_1(\gamma), \  Ax \in \mathcal{B}_{\infty}(\gamma), 
\end{aligned}\label{example-2}
\end{align}
where $\mathcal{B}_1(\gamma) := \{x \ \mid \ \|x\|_1 \le \gamma\}$ is a $1$-norm ball, $\mathcal{B}_{\infty}(\gamma) := \{x \ \mid \ \|x\|_\infty \le \gamma\}$ is an $\infty$-norm ball and $\gamma$ is a given constant. Both norm constraints can be transformed to equivalent linear constraints and we use the native transformation from MathOptInterface.jl~\cite{MOI} in our implementation.

We set $A$ to the identity matrix in order to reduce the effect of constraints other than the generalized power cone. The solver settings are kept the same as in the previous example, along with the same transformation of the generalized power cone to a product of three-dimensional power cones as in~\eqref{example-1-extended}. We again vary the dimension $n$ from $100$ to $2500$ and present the computational results in Table~\ref{table-2-1} and Table~\ref{table-2-2}.
\ifpreprint
\begin{table}[H]
	\begin{center}
		\begin{tabular}{ |c||c|c|c|c|  }
			%	\hline
			%	\multicolumn{4}{|c|}{Country List} \\
			\hline
			n & \texttt{Clarabel-GenPow} & \texttt{Clarabel-Pow} & \texttt{Mosek} & \texttt{Hypatia}\\
			\hline
			100   & 6.41 & 9.79 & $<$10 & 26 \\
			\hline
			500   & 16.8 & 39.4 & 32 & 436 \\
			\hline
			2500  & 90.6 & 256 & 141 & 23413 \\
			\hline
		\end{tabular}
	\end{center}
	\caption{Time (ms) w.r.t. dimensions (generalized power cones)\label{table-2-1}}
\end{table}
\begin{table}[H]
	\begin{center}
		\begin{tabular}{ |c||c|c|c|c|  }
			%	\hline
			%	\multicolumn{4}{|c|}{Country List} \\
			\hline
			n & \texttt{Clarabel-GenPow} & \texttt{Clarabel-Pow} & \texttt{Mosek} & \texttt{Hypatia}\\
			\hline
			100   & 18 & 22 & 9 & 11 \\
			\hline
			500   & 18 & 27 & 11 & 16 \\
			\hline
			2500  & 18 & 29 & 13 & 20 \\
			\hline
		\end{tabular}
	\end{center}
	\caption{Iteration number w.r.t. dimensions (generalized power cones)\label{table-2-2}}
\end{table}
\else
\begin{table}[h]
	\begin{center}
		\begin{minipage}{\textwidth}
			\caption{Total computational time (ms) w.r.t. dimensional change\label{table-2-1}}
			\begin{tabular*}{\textwidth}{@{\extracolsep{\fill}}lcccc@{\extracolsep{\fill}}}
				\toprule%
				& \multicolumn{4}{@{}c@{}}{Computational time (ms)} \\ \cmidrule{2-5}%
				n & Clarabel-GenPow & Clarabel-Pow & Mosek & Hypatia\\
				\midrule
				100   & 6.41 & 9.79 & 9 & 26 \\
				500   & 16.8 & 39.4 & 32 & 436 \\
				2500  & 90.6 & 256 & 141 & 23413 \\
				\botrule
			\end{tabular*}
		\end{minipage}
	\end{center}
\end{table}
\begin{table}[h]
	\begin{center}
		\begin{minipage}{\textwidth}
			\caption{Total iteration number w.r.t. dimensional change\label{table-2-2}}
			\begin{tabular*}{\textwidth}{@{\extracolsep{\fill}}lcccc@{\extracolsep{\fill}}}
				\toprule%
				& \multicolumn{4}{@{}c@{}}{Iteration number} \\ \cmidrule{2-5}%
				n & Clarabel-GenPow & Clarabel-Pow & Mosek & Hypatia\\
				\midrule
				100   & 18 & 22 & 9 & 11 \\
				500   & 18 & 27 & 11 & 16 \\
				2500  & 18 & 29 & 13 & 20 \\
				\botrule
			\end{tabular*}
		\end{minipage}
	\end{center}
\end{table}
\fi

Once again, Table~\ref{table-2-1} shows the superior performance of \texttt{Clarabel-GenPow} over \texttt{Clarabel-Pow}, \texttt{Mosek} and \texttt{Hypatia}. Our sparse decomposition can solve~\eqref{example-2} within $100$ ms even up to dimension $n=2500$. Though the speedup seems to be less for the maximum volume hypercube problem than the discrete maximum likelihood example, it mainly comes from increased computational time due to the introduction of norm constraints rather than the efficiency of our sparse decomposition for generalized power cones. Meanwhile, we also list numerical results for both~\eqref{example-1} and~\eqref{example-2} in \texttt{Hypatia}, but it seems to take much more time than other three solvers under any setting. This is due to the fact that \texttt{Hypatia} uses the Cholesky factorization of the reduced system that factorizes a $(n+1) \times (n+1)$ dense positive semidefinite matrix for each iteration and doesn't utilize the sparsity of problem~\eqref{example-1} and~\eqref{example-2}. It was verified by our additional experiments that we tried to increase the density of $A$ in~\eqref{example-2} given a fixed dimension $n$ and found increased computational time for all settings except \texttt{Hypatia}. 

\subsubsection{Test of power mean cones}
We can again replace the generalized power cone in~\eqref{example-2} with a power mean cone, 
\begin{align*}
	\begin{aligned}
		& \max \quad t \\
		\text{s.t.} \quad & (x,t) \in \mathcal{K}_{\mathrm{powm}(\alpha,n)}, \alpha = \left[\frac{1}{n}, \dots, \frac{1}{n}\right], \\
		& Ax \in \mathcal{B}_1(\gamma), \  Ax \in \mathcal{B}_{\infty}(\gamma), 
	\end{aligned}
\end{align*}
which should yields the same solution as~\eqref{example-2} since the optimum of~\eqref{example-2} must be $t \ge 0$. We test it with $n=100,500,2500$ and Table~\ref{table-2-3} shows that the proposed implementation of power mean cones also performs much better than \texttt{Hypatia}. \texttt{Clarabel-PowM} also performs slightly slower than our sparse implementation of generalized power cones but better than \texttt{Mosek} in solving the maximum volume hypercube problem.
\begin{table}[!htb]
	\begin{subtable}{.5\linewidth}
		\centering
		\begin{tabular}{ |c||c|c|  }
			\hline
			n & \texttt{Clarabel-PowM} & \texttt{Hypatia}\\
			\hline
			100   & 6.1 & 27.0 \\
			\hline
			500   & 20.6 & 370 \\
			\hline
			2500  & 120.4 & 23272 \\
			\hline
		\end{tabular}
		\caption{Time (ms) w.r.t. dimensions}
	\end{subtable}%
	\begin{subtable}{.5\linewidth}
		\centering
		\begin{tabular}{ |c||c|c|  }
			\hline
			n & \texttt{Clarabel-PowM} & \texttt{Hypatia}\\
			\hline
			100   & 15 & 15 \\
			\hline
			500   & 18 & 17\\
			\hline
			2500  & 20 & 21 \\
			\hline
		\end{tabular}
		\caption{Iteration number w.r.t. dimensions}
	\end{subtable} 
	\caption{The maximum volume problem via power mean cones}\label{table-2-3}
\end{table}

\subsection{Entropy maximization}
The entropy maximization problem can be reformulated to the problem as follows:
\begin{align}
\begin{aligned}
	\min \quad & t \\
	\text{s.t.} \quad & \sum_{i \in [d]} p_i = 1, \ p_i \ge 0, \\
	& (r_i,q_i,p_i) \in \mathcal{K}_{\text{exp}}, \forall i \in [d]\\
	& -\sum_{i \in [d]} r_i \le t, \\
	&  \quad p \in \mathcal{P}, q \in \mathcal{Q},
\end{aligned}\label{example-3}
\end{align}
where $\mathcal{P}$ defines additional constraints over the distribution $p$. Suppose $q$ is a prior distribution,~\eqref{example-3} becomes minimizing Kullback-Leiber divergence. We can reformulate~\eqref{example-3} as 
\begin{align}
	\begin{aligned}
		\min \quad & t \\
		\text{s.t.} \quad & \sum_{i \in [d]} p_i = 1, \\
		& (t,q,p) \in \mathcal{K}_{\mathrm{rel}}, \\
		& p \in \mathcal{P}, q \in \mathcal{Q},
	\end{aligned}\label{example-3-extend}
\end{align}
where $\mathcal{K}_{\mathrm{rel}}$ is the relative entropy cone defined in~\eqref{entropy-cone}.
\ifpreprint
\begin{table}[H]
	\begin{center}
		\begin{tabular}{ |c||c|c|c|c|  }
			%	\hline
			%	\multicolumn{4}{|c|}{Country List} \\
			\hline
			d & \texttt{Clarabel-Entropy} & \texttt{Clarabel-Exp} & \texttt{Mosek} & \texttt{ECOS}\\
			\hline
			1000   & 21.0 & 38.9 & 15 & 34.0 \\
			\hline
			2000   & 51.1 & 77.6 & 31 & 66.2 \\
			\hline
			5000  & 139 & 220 & 79 & 189.1 \\
			\hline
		\end{tabular}
	\end{center}
	\caption{Time (ms) w.r.t. dimensions (relative entropy cones)\label{table-3-1}}
\end{table}
\begin{table}[H]
	\begin{center}
		\begin{tabular}{ |c||c|c|c|c|  }
			%	\hline
			%	\multicolumn{4}{|c|}{Country List} \\
			\hline
			d & \texttt{Clarabel-Entropy} & \texttt{Clarabel-Exp} & \texttt{Mosek} & \texttt{ECOS}\\
			\hline
			1000   & 16 & 17 & 10 & 18 \\
			\hline
			2000   & 16 & 16 & 10 & 18 \\
			\hline
			5000  & 16 & 17 & 10 & 20 \\
			\hline
		\end{tabular}
	\end{center}
	\caption{Iteration number w.r.t. dimensions (relative entropy cones)\label{table-3-2}}
\end{table}
\else
\begin{table}[h]
	\begin{center}
		\begin{minipage}{\textwidth}
			\caption{Total computational time (ms) w.r.t. dimensional change\label{table-3-1}}
			\begin{tabular*}{\textwidth}{@{\extracolsep{\fill}}lcccc@{\extracolsep{\fill}}}
				\toprule%
				& \multicolumn{4}{@{}c@{}}{Computational time (ms)} \\ \cmidrule{2-5}%
				d & Clarabel-Entropy & Clarabel-Exp & Mosek & ECOS\\
				\midrule
				1000   & 21.0 & 38.9 & 15 & 34.0 \\
				2000   & 51.1 & 77.6 & 31 & 66.2 \\
				5000  & 139 & 220 & 79 & 189.1 \\
				\botrule
			\end{tabular*}
		\end{minipage}
	\end{center}
\end{table}
\begin{table}[h]
	\begin{center}
		\begin{minipage}{\textwidth}
			\caption{Total iteration number w.r.t. dimensional change\label{table-3-2}}
			\begin{tabular*}{\textwidth}{@{\extracolsep{\fill}}lcccc@{\extracolsep{\fill}}}
				\toprule%
				& \multicolumn{4}{@{}c@{}}{Iteration number} \\ \cmidrule{2-5}%
				d & Clarabel-Entropy & Clarabel-Exp & Mosek & ECOS\\
				\midrule
				1000   & 16 & 17 & 10 & 18 \\
				2000   & 16 & 16 & 10 & 18 \\
				5000  & 16 & 17 & 10 & 20 \\
				\botrule
			\end{tabular*}
		\end{minipage}
	\end{center}
\end{table}
\fi

If we set $\mathcal{Q}$ to a randomly generated distribution and $\mathcal{P} = \emptyset$, the results are shown in Table~\ref{table-3-1} and Table~\ref{table-3-2}. We found the sparse exploitation can save computational time but not as much as the implementation for generalized power cones. \texttt{Clarabel-Entropy} is about $30\%$ faster than \texttt{Clarabel-Exp} or \texttt{ECOS} but is slower than \texttt{Mosek} which uses the primal-dual scaling strategy with higher-order correction. 
%\PGnote{Maybe it would be a good idea to describe all solvers in a different font so it is clear that we are referring to software packages, e.g.\ \texttt{Mosek}}
%\YCnote{2}{Should I also change them in texts?}
This could be potentially explained by: (1) Reformulating a relative entropy cone as a product of three-dimensional exponential cones doesn't substantially increase the number of nonzero entries in matrix factorization; (2) the self-concordant parameter decreases from $3d+1$ to $3d$, which doesn't reduce as much as the generalized power cone; (3) The initial point for relative entropy cones is approximated and needs to be improved.

\section{Conclusion}\label{section-conclusion}
In this paper, we have presented an efficient sparse decomposition of generalized power cones, power mean cones and relative entropy cones for the dual scaling interior point method. We have proposed logarithmically-homogeneous self-concordant barrier functions for the dual cones of three nonsymmetric cones respectively. We have detailed the computation of corresponding gradients and Hessians and then augmented the linear system to a sparse form by exploiting the low-rank property inside Hessian matrices of these barrier functions, giving a set of decomposition that ensures the quasidefiniteness of the augmented linear system and makes it factorizable under the LDL factorization with static pivoting. The experimental results demonstrated numerical stability of our sparse decomposition for these cones and the efficiency of our method over the state-of-art solvers when an optimization problem is sparse.

\newpage
\bibliography{reference}

\newpage
\appendix

\section{Proof of Theorem~\ref{theorem-barrier}}\label{appendix-proof}
\begin{proof}
	We split the proof into three parts corresponding to each cone mentioned in Theorem~\ref{theorem-barrier}.
	\begin{enumerate}[i)]
		\item $\mathcal{K}_{\text {gpow }(\alpha,d_1,d_2)}$ has a valid $(d_1+1)$-LHSCB function (Theorem 1 in~\cite{Roy22}),
		\begin{align*}
			f_{\text{gpow}}(u, w)=-\ln \left(\prod_{i \in [d_1]} \left(u_i\right)^{2 \alpha_i}- \|w\|^2\right)-\sum_{i \in [d_1]}\left(1-\alpha_i \right) \ln \left(u_i\right),
		\end{align*}
		which implies
		$$
		\vert \langle f'''_{\text{gpow}}(u, w)[h]h,h \rangle \vert \le 2\langle f''_{\text{gpow}}(u, w)h,h \rangle^{3/2}, \ \forall h \in \mathbb{R}^{d_1+d_2}.
		$$
		Note that $f^*_{\text{gpow}}(u, w) = f_{\text{gpow}}(\alpha^{-1}u, w)$ where $\alpha^{-1}u := (\cdots,u_i/\alpha_i,\cdots)$ denotes element-wise division. We then have 
		\begin{align*}
			\vert \langle {f^*_{\text{gpow}}}'''(u, w)[h]h,h \rangle \vert & = \vert \langle f'''_{\text{gpow}}(\alpha^{-1}u, w)[D_1h]D_1h,D_1h \rangle \vert \\
			&\le 2\langle f''_{\text{gpow}}(\alpha^{-1}u, w)D_1h,D_1h \rangle^{3/2} \\
			& = 2\langle {f^*_{\text{gpow}}   }''(\alpha^{-1}u, w)h,h \rangle^{3/2} 
		\end{align*}
		for any $h \in \mathbb{R}^{d_1+d_2}$ with a linear transformation $$D_1 := \text{diag}(1/\alpha_1,\cdots,1/\alpha_i,\cdots,1/\alpha_{d_1},\mathbf{1}^{d_2}).$$ The conjugate barrier $f_{\text{gpow}}^*(u, w)$ satisfies
		\begin{align*}
			f_{\text{gpow}}^*(t u, t w) = f_{\text{gpow}}^*(u, w) - (d_1+1)\ln(t), \ \forall (u,w) \in \mathcal{K}^*_{\text {gpow }(\alpha,d_1,d_2)}, t > 0,
		\end{align*}
		hence $f_{\text{gpow}}^*(u, w)$ is a $(d_1+1)$-LHSCB function of $\mathcal{K}^*_{\text {gpow }(\alpha,d_1,d_2)}$ due to~\eqref{self-concordant}. 
		\bigskip
		\item The same reasoning applies to $f^*_{\mathrm{powm}}$ since we already know  that 
		\begin{align*}
			f(u, w)=-\ln \left(\prod_{i \in [d]} u_i^{\alpha_i}- w\right)-\sum_{i \in [d]} (1-\alpha_i)\ln \left(u_i\right) - \ln(w) 
		\end{align*}
		is a $(d+1)$-LHSCB function of the nonnegative power cone
		\begin{align*}
			\mathcal{K}_{\alpha}^{(n,+)} := \{(u,w) \in \mathbb{R}^d_+ \times \mathbb{R}_+: \prod_{i \in[d]} u_i^{\alpha_i} \ge w\}
		\end{align*} 
		from Theorem 2 in~\cite{Roy22}, which differs from $f^*_{\mathrm{powm}}(u, w)$ by the transformation $D_2 := \text{diag}(1/\alpha_1,\cdots,1/\alpha_i,\cdots,1/\alpha_d,-1)$, like its counterparts for the generalized power cones.
		\bigskip
		\item 	For the proof w.r.t.  $f_{\mathrm{rel}}^*(u,v,w)$, we define the map $A: \mathbb{R}^2 \to \mathbb{R}$
		\begin{align*}
			A(x,y) := -x\ln\left(\frac{x}{y}\right) + x.
		\end{align*}
		Following the same argument for relative entropy cones in~\cite[Appendix E]{DDS}, we have $A(x,y)$ is $(\mathbb{R}_+, 1)$-compatible with the barrier $-\ln(x) - \ln(y)$ and then $t + A(x,y)$ is $(\mathbb{R}_+, 1)$-compatible with the barrier $-\ln(x) - \ln(y)$. Hence $-\ln(t+A(x,y))-\ln(x) - \ln(y)$ is a $3$-self-concordant barrier due to Proposition 5.1.7 in~\cite{Nesterov94}. Moreover, it is easy to verify $-\ln(t+A(x,y))-\ln(x) - \ln(y)$ is $3$-logarithmically homogeneous and is therefore a $3$-LHSCB function. 
		
		This implies that $-\left(w_i - u\ln\left(\frac{u}{v_i}\right)+u\right) - \ln(u) - \ln(v_i)$ is a $3$-self-concordant barrier by replacing $t,x,y$ with $w_i,u,v_i, \forall i \in [d]$. According to the summation rule (Proposition 5.1.3) in~\cite{Nesterov94}, we can prove $f_{\mathrm{rel}}^*(u,v,w)$ is $3d$-LHSCB.
	\end{enumerate}	
\end{proof}

Note that the LHSCB parameter for a relative entropy cone is $\nu = (2d+1)$ in~\cite{DDS,Hypatia}, which is smaller than our choice $\nu = 3d$ for the dual of relative entropy cone. A LHSCB function with a smaller $\nu$ is possible and can reduce the total number of iterations of an IPM in theory, but it is out of the scope of this paper. 

\section{Root finding for power mean cones}\label{appendix-power-mean}
Given $r < 0, x> 0$ and $\alpha_i \in (0,1), \forall i \in [d]$, the Hessian of~\eqref{h-def-power-mean} is
\begin{align*}
	\begin{footnotesize}
\begin{aligned}
	h''(x) &= \sum_{i \in [d]}\alpha_i \cdot \frac{\frac{2r^2}{(r^2+4x^2)^{3/2}}\left(x/\alpha_i + \left(r+\sqrt{r^2+4x^2}\right)/2\right)- \left(1/\alpha_i + 2x/\sqrt{r^2+4x^2}\right)^2}{\left(x/\alpha_i+\left(r+\sqrt{r^2+4x^2}\right)/2\right)^2}\\
	& = \sum_{i \in [d]}\alpha_i \cdot \frac{2r^2x/\alpha_i + r^3+r^2\sqrt{r^2+4x^2}- (r^2+4x^2)^{3/2}/\alpha_i^2 - 4x(r^2+4x^2)/\alpha_i - 4x^2\sqrt{r^2+4x^2}}{(r^2+4x^2)^{3/2}\left(x/\alpha_i+\left(r+\sqrt{r^2+4x^2}\right)/2\right)^2}\\
	& = \sum_{i \in [d]}\alpha_i \cdot \frac{r^3+\sqrt{r^2+4x^2}\left(r^2 -(r^2+4x^2)/\alpha_i^2\right) - 2xr^2/\alpha_i-16x^3/\alpha_i - 4x^2\sqrt{r^2+4x^2}}{(r^2+4x^2)^{3/2}\left(x/\alpha_i+\left(r+\sqrt{r^2+4x^2}\right)/2\right)^2}\\
	& < \sum_{i \in [d]}\alpha_i \cdot \frac{r^3 - 2xr^2/\alpha_i-16x^3/\alpha_i - 4x^2\sqrt{r^2+4x^2}}{(r^2+4x^2)^{3/2}\left(x/\alpha_i+\left(r+\sqrt{r^2+4x^2}\right)/2\right)^2} < 0.
\end{aligned}
\end{footnotesize}
\end{align*}
%\section{Additional comments}
%We can possibly exploit sparsity for other vectorized cones in sparse LDL decomposition, like separable spectral function cones, but I can't guarantee the quasi-definiteness of the factorized matrix inside IPMs, so we may need to use LDL factorization with $2 \times 2$ pivots. Also, the scaling point matters whether or how we can exploit sparsity in a nonsymmetric cones. For instance, the barrier function of relative entropy cones, generalized power cones and power mean cones also have an diagonal plus low-rank structure, similar to the dual of generalized power cones and power mean cones in this paper, that can be exploited if we choose the primal-scaling strategy. 
%\PGnote{I think this final section is too speculative and should be removed.}
%\YCnote{2}{Can I keep parts of them since those primal cones also have the same structure and I have verified it before.}
\end{document}